 \gdef\texorpdfstring#1#2{#1}
\gdef\mytheorem{\relax}\gdef\mydefinition{\theoremstyle{definition}}
  \let\oldnewtheorem\newtheorem%
  \gdef\newtheorem#1{\expandafter\NewDocumentCommand\csname #1\endcsname{o}{\csname old#1\endcsname}\expandafter\gdef\csname end#1\endcsname{\csname endold#1\endcsname}
	\oldnewtheorem{old#1}}
	\gdef\mytheoremcounter{oldtheorem}
\newtheorem{theorem}{Theorem}[section]
\newtheorem{corollary}[\mytheoremcounter]{Corollary}
\newtheorem{lemma}[\mytheoremcounter]{Lemma}
\newtheorem{proposition}[\mytheoremcounter]{Proposition}
\newtheorem{definition}[\mytheoremcounter]{Definition}
\newtheorem{defiundlemma}[\mytheoremcounter]{Definition and Lemma}
\newtheorem{construction}[\mytheoremcounter]{Construction}
\theoremstyle{remark}
\newtheorem{remark}[\mytheoremcounter]{Remark}
\gdef\ii{I}\gdef\ij{J}\gdef\ik{K}\gdef\il{L}
\gdef\oi{i}\gdef\oj{j}\gdef\ok{k}\gdef\ol{l}
\gdef\schwarzs{\mathcal S}\let\schws\schwarzs
\gdef\schwarzg{\outg[\schwarzs]}
\gdef\euclideane{e}\let\euce\euclideane
\gdef\eukg{\outg[\euclideane\hspace{.05em}]}
\gdef\rad{r}
\gdef\time{t}
\gdef\atime{\tau}
\gdef\volume#1{\left|#1\right|}
\gdef\lieD#1#2{\mathfrak L_{#1}#2}
\NewDocumentCommand\c{G{c}}{\DoATensor{#1}}
\NewDocumentCommand\Cof{G{C}d<>o}
{\IfValueTF{#3}{\Cof{#1}<\IfValueTF{#2}{#2,}\relax#3>}{#1\IfValueTF{#2}{(#2)}\relax}}%
\gdef\rnu{\DoATensor[\!]u}
\gdef\tnu{\DoATensor[\hspace{-.05em}] w}
\gdef\lapse{\DoATensor[\!]u}
\gdef\rbeta{\DoATensor[\!]\beta}
\gdef\centerz{\DoATensor<\hspace{-.05em}>[\!]{\boldsymbol z}}
\let\g\metric
\let\ric\ricci
\gdef\scalar{\DoATensor{\newmathcal S}}
\let\zFund\k
\let\zFundtrf\ktrf
\let\H\mc
\let\oldnu\nu
\gdef\normal{\DoATensor[\!]{\oldnu}}
\let\nu\normal
\gdef\laplace{\DoATensor[\!]\Delta}
\gdef\div{\DoATensor[\!]{\text{div}}}
\gdef\tr{\DoATensor[\!]{\text{tr}}}
\gdef\levi{\DoATensor<\!>\nabla}
\gdef\mug{\DoATensor[\!]{\mu}}
\gdef\jacobiext{\DoATensor<\hspace{-.05em}>[\hspace{-.05em}]{\textrm L}}
\NewDocumentCommand\trzd{D<>{}O{}mm}
{\mathop{{#3\DoATensor<#1>[#2]\odot{#4}}}}
\NewDocumentCommand\trtr{D<>{}O{}mm}
{\mathop{{\ifthenelse{\equal{#3}{#4}}
 {{\left|#3\right|_{\g<#1>[#2]}^2}}
 {\tr<#1>[#2](\trzd<#1>[#2]{#3}{#4})}}}}
\let\outsymbol\overline
\let\outnu\outnormal
\gdef\radialdirection{\outsymbol\mu}
\NewDocumentCommand\outc{G{c}}{\DoAoutTensor{#1}}\let\oc\outc
\gdef\outdelta{\DoATensor\delta}
\gdef\outve{\DoATensor\ve}
\gdef\ralpha{\DoAoutTensor[\hspace{-.05em}]\alpha}
\gdef\outbeta{\DoAoutTensor[\!]\beta}
\gdef\outcenter{\DoAoutTensor{\boldsymbol z}}
\gdef\momentum{\DoAoutTensor[\!]\Pi}
\gdef\momenden{\DoAoutTensor[\!]J}
\gdef\enden{\DoAoutTensor[\hspace{-.05em}]\varrho}
\gdef\outlevi{\DoAoutTensor\nabla}
\let\outric\outricci
\let\outg\outmetric
\let\outzFund\outk
\NewDocumentCommand\troutzFund{D<>{}O{}}{\tr<#1>[#2]\hspace{.05em}\outzFund[#2]}
\NewDocumentCommand\outzFundnu{D<>{}O{}}{\mathop{{\outzFund[#2]_{\nu<#1>[#2]}}}}
\gdef\outmc{\DoAoutTensor{\mathcal H}}
\let\outH\outmc
\let\outsc\outscalar
\gdef\outdiv{\DoAoutTensor[\!]{\text{div}}}
\gdef\outtr{\DoAoutTensor[\!]{\text{tr}}}
\NewDocumentCommand\outtrzd{D<>{}O{}mm}
{\mathop{{#3\DoAoutTensor<#1>[#2]\odot{#4}}}}
\NewDocumentCommand\outtrtr{D<>{}O{}mm}
{\mathop{{\ifthenelse{\equal{#3}{#4}}
 {{\left|#3\right|_{\outg<#1>[#2]}^2}}
 {\mathop{{\outtr<#1>[#2](\outtrzd<#1>[#2]{#3}{#4})}}}}}}
\let\aoutsymbol\overline
\gdef\arnu{\DoATensor[\hspace{-.05em}] w}
\gdef\atbeta{\DoATensor[\hspace{-.05em}]\gamma}
\gdef\amomentum{\DoAaoutTensor[\!]\Pi}
\let\aoutric\aoutricci
\let\aoutg\aoutmetric
\let\aoutzFund\aoutk
\gdef\aoutdiv{\DoAaoutTensor[\!\!]{\text{div}}}
\gdef\aouttr{\DoAaoutTensor[\!]{\text{tr}}}
\let\unisymbol\widehat
\let\uniric\uniricci
\let\unig\unimetric
\let\unisc\uniscalar
\gdef\outM{\DoAoutTensor[\hspace{-.025em}]{M}}
\gdef\aoutM{\DoAaoutTensor[\hspace{-.025em}]{M}}
\gdef\uniM{\DoAuniTensor[\!]{M}}
\gdef\M{\DoATensor<\hspace{-.05em}>[\hspace{-.05em}]\Sigma}
\NewDocumentCommand\tracefree{m}{\,\,\,\phantomas{}{\!\,^{\!\,^\circ}}\!\!\!#1}
\gdef\Hradius{\sigma}
\gdef\rradius{\mathcal r}
\gdef\impuls{\DoAoutTensor{\textrm P}}
\gdef\aimpuls{\DoATensor{\textrm P}}
\NewDocumentCommand\d{s}{\IfBooleanTF{#1}\relax{\mathop{}\!}\mathrm d}%
\DeclareMathOperator\id{id}
\DeclareMathOperator\lin{lin}
\DeclareMathOperator\graph{graph}
\newcommand\R{\mathds{R}}																						
\newcommand\X{\mathfrak{X}}																					
\newcommand\Lp{\textrm L}
\newcommand\Wkp{\textrm W}
\newcommand\Hk{\textrm H}
\newcommand\Ck{\mathcal C}
\newcommand\sphere{\mathcal S}																			
\newcommand\ve{\varepsilon}																					
\newcommand\Haus{\mathcal{H}}																				
\DeclareMathAlphabet{\mathcal}{OT1}{pzc}{m}{n}
\let\newmathcal\mathcal
\NewDocumentCommand\DoATensor{D<>{}O{}md<>os}
{\DoATensor@start{#3}
 {\IfValueTF{#4}
  {\IfValueTF{#5}{\TensorIndex*[#4#1]<#5#2>{#3}}
	 {\TensorIndex*[#4#1]{#3}}}
	{\IfValueTF{#5}{\TensorIndex*<#5#2>{#3}}
	 {\TensorIndex*{#3}}}}}
\NewDocumentCommand\DoAoutTensor{D<>{}O{}md<>os}
{\DoATensor@start{#3}
 {\IfValueTF{#4}
  {\IfValueTF{#5}{\TensorIndex*[#4#1]<#5#2>{\outsymbol{#3}}}
	 {\TensorIndex*[#4#1]{\outsymbol{#3}}}}
	{\IfValueTF{#5}{\TensorIndex*<#5#2>{\outsymbol{#3}}}
	 {\TensorIndex*{\outsymbol{#3}}}}}}
\NewDocumentCommand\DoAaoutTensor{D<>{}O{}md<>os}
{\DoATensor@start{#3}
 {\IfValueTF{#4}
  {\IfValueTF{#5}{\TensorIndex*[#4#1]<#5#2>{\aoutsymbol{#3}}}
	 {\TensorIndex*[#4#1]{\aoutsymbol{#3}}}}
	{\IfValueTF{#5}{\TensorIndex*<#5#2>{\aoutsymbol{#3}}}
	 {\TensorIndex*{\aoutsymbol{#3}}}}}}
\NewDocumentCommand\DoAuniTensor{D<>{}O{}md<>os}
{\DoATensor@start{#3}
 {\IfValueTF{#4}
  {\IfValueTF{#5}{\TensorIndex*[#4#1]<#5#2>{\unisymbol{#3}}}
	 {\TensorIndex*[#4#1]{\unisymbol{#3}}}}
	{\IfValueTF{#5}{\TensorIndex*<#5#2>{\unisymbol{#3}}}
	 {\TensorIndex*{\unisymbol{#3}}}}}}
\NewDocumentCommand\DoATensor@start{smm}
{\DoATensor@checkforsubindex{#1}{#2}{#3}}
\NewDocumentCommand\DoATensor@checkforsubindex{mmmt_}
{\IfBooleanTF{#4}{\DoATensor@index{#2}{#3}_}
 {\DoATensor@checkforsupindex{#1}{#2}{#3}}}
\NewDocumentCommand\DoATensor@checkforsupindex{mmmt^}
{\IfBooleanTF{#4}{\DoATensor@index{#2}{#3}^}
 {\IfBooleanTF{#1}{\mathop{#3}}{#3}}}
\NewDocumentCommand\DoATensor@index{mmmm}
{\DoATensor@start*{#1}{#2\vphantom{#1}#3{#4}}}
\NewDocumentCommand\TensorIndex{ssod<>msod<>}
{\IfBooleanTF{#1}
 {\TensorIndex[#3]<#4>{#5}*[#7]<#8>}
 {\IfBooleanTF{#2}
  {\mathop{\TensorIndex[#3]<#4>{#5}*[#7]<#8>}}
  {\IfBooleanTF{#6}
   {\Tensor@Indexparse@@left{\IfValueTF{#3}{_{#3}}\relax}{\IfValueTF{#4}{^{#4}}\relax}{#5}%
    #5
    \IfValueTF{#7}{_#7}\relax\IfValueTF{#8}{^#8}\relax}
   {\IfValueTF{#7}{\mathop{\TensorIndex[#3]<#4>{#5}*[#7]<#8>}}
    {\IfValueTF{#8}{\mathop{\TensorIndex[#3]<#4>{#5}*[#7]<#8>}}
	   {\IfValueTF{#3}{\mathop{}\TensorIndex[#3]<#4>{#5}*[#7]<#8>}
	    {\IfValueTF{#4}{\mathop{}\TensorIndex[#3]<#4>{#5}*[#7]<#8>}{\mathop{#5}}}}}}}}}
\newdimen\Tensor@widthmax
\newdimen\Tensor@widthfirst
\newdimen\Tensor@widthsecond
\NewDocumentCommand\Tensor@Indexparse@@left{smmm}
{\IfBooleanTF{#1}
 {\setlength\Tensor@widthfirst{\widthof{\ensuremath{\vphantom{#4}#2}}}%
  \setlength\Tensor@widthsecond{\widthof{\ensuremath{\vphantom{#4}#3}}}%
  \setlength\Tensor@widthmax{\maxof\Tensor@widthfirst\Tensor@widthsecond}
  \kern\Tensor@widthmax
	\kern-\Tensor@widthfirst%
	\vphantom{#4}#2
	\kern-\Tensor@widthsecond%
	\vphantom{#4}#3
  \kern-\scriptspace}
 {\ifthenelse{\equal{#2}\relax}
  {\ifthenelse{\equal{#3}\relax}\relax
	 {\Tensor@Indexparse@@left*{#2}{#3}{#4}}}
	{\Tensor@Indexparse@@left*{#2}{#3}{#4}}}}
\newcommand\phantomas[3][c]{
	\ifmmode
		\makebox[\widthof{$#2$}][#1]{$#3$}
	\else
		\makebox[\widthof{#2}][#1]{#3}
	\fi
}
\title[Time evolution of ADM and CMC center of mass]{Time evolution of ADM and CMC\\center of mass in General Relativity}%
 \author[Christopher Nerz]{Christopher Nerz}
 \address{Mathematisches Institut\\Universit\"at T\"ubingen\\Auf der Morgenstelle 10\\72076 T\"ubingen, Germany}
 \email{christopher.nerz@math.uni-tuebingen.de}
 \date\today
\begin{document}
\begin{abstract}\noindent 
It is shown by several authors going back to Huisken-Yau that asymptotically Schwarzschildean time-slices possess a unique foliation by stable constant mean curvature (CMC) spheres defining the so-called CMC center of mass. We analyze how the leaves of this foliation evolve in time under the Einstein equations. More precisely, we prove that, asymptotically, their time evolution is a translation induced by the quotient of their linear momentum $\impuls$ and mass $m$, as to be expected from the corresponding Newtonian setting. In particular, the definitions of mass and linear momentum defined by Arnowitt-Deser-Misner (ADM) are compatible with the interpretation of the CMC foliation as the center of mass of the time-slice by Huisken-Yau. Furthermore, we prove that the coordinate version of the center of mass by Arnowitt-Deser-Misner and the coordinate version of the CMC center of mass coincide -- without additional conditions on the scalar curvature. This is even true in the sense of existence, i.\,e.\ if one of the two exists then so does the other.
\end{abstract}
\maketitle
\let\sc\scalar%

\section{Introduction and general considerations}
It is well known, that the \emph{center of mass} $\vec z$ of any isolated Newtonian gravitating system evolves in time via
\begin{align*} \partial[t]@{\vec z} = \frac{\vec P}m, \end{align*}
where $\vec P$ denotes the \emph{linear momentum} and $m>0$ the \emph{mass} of the entire system. It is thus natural to ask whether this is also true when we consider isolated systems in general relativity. For such systems, there is no obvious way to define physical quantities such as mass, linear momentum, and center of mass. For relativistic (total) mass $m$ and relativistic (total) linear momentum $\impuls$, the definitions given by Arnowitt-Deser-Misner (ADM) \cite{arnowitt1961coordinate} are well-established in the literature. However, the contemporary literature knows several definitions of the relativistic center of mass of an isolated system in general relativity.\smallskip

Several authors suggest to define the center of mass of an isolated system as a foliation near infinity of the corresponding Riemannian manifold with total mass $m>0$. Following \cite{cederbaumnerz2013_examples}, we will call such definitions \emph{abstract} to contrast what we call \emph{coordinate} definitions of center of mass, see below. The first such definition was given by Huisken-Yau \cite{huisken_yau_foliation}, who defined the (CMC) center of mass to be the unique foliation near infinity by closed, stable surfaces with constant mean curvature. This was motivated by the idea of using CMC surfaces in this setting by Christodoulou-Yau \cite{christodoulou71some}. Later, Lamm-Metzger-Schulze \cite{lamm2011foliationsbywillmore} used a unique foliation by spheres of Willmore type and (in the static case) Cederbaum \cite[Cor.~3.3.4,\;Thm.~5.3.3]{Cederbaum_newtonian_limit} used level-sets of the lapse function.\smallskip

There are several \emph{coordinate} centers of mass, e.\,g.\ the one suggested by Arnowitt-Deser-Misner \cite{arnowitt1961coordinate}, by Huisken-Yau \cite{huisken_yau_foliation}, by Corvino-Schoen \cite{corvino2006asymptotics} and by Huang \cite{Lan_Hsuan_Huang__Foliations_by_Stable_Spheres_with_Constant_Mean_Curvature} (based on an idea by Schoen). Here, we call them \lq coordinate\rq\ centers of mass as they are defined as a (three-dimensional) vector $\centerz\in\R^3$ which depends on the choice of coordinates near infinity -- at least a-priori. It is well-known that the coordinate CMC center of mass coincides with the coordinate ADM center of mass if the scalar curvature is asymptotically antisymmetric, see \cite{Lan_Hsuan_Huang__Foliations_by_Stable_Spheres_with_Constant_Mean_Curvature} and \cite{metzger_eichmair_2012_unique}. In the context of static isolated systems, Cederbaum \cite[Def.~4.3.1]{Cederbaum_newtonian_limit} defined a \lq pseudo-Newtonian\rq\ (quasi-local and total) coordinate center of mass and proved that it coincides with the coordinate CMC and ADM centers of mass \cite[Thm.~4.3.5]{Cederbaum_newtonian_limit}. Furthermore, she showed that the coordinate pseudo-Newtonian (and thus the coordinate CMC and coordinate ADM) center of mass converges to the Newtonian one in the Newtonian limit $c\to\infty$. The coordinate CMC center of mass is generally well-defined for static isolated systems as the scalar curvature vanishes outside a compact set, see also \cite{cederbaumnerz2013_examples}.\smallskip

Chen-Wang-Yau recently suggested a completely new definition of (quasi-local and total) center of mass, which is given by optimal isometric embeddings into Minkowsky spacetime \cite[Def.~3.2]{chen_wang_yau__Quasilocal_angular_momentum_and_center_of_mass_in_gr}. Additionally, their center of mass fulfills $\spartial[t]@\centerz=\nicefrac\impuls m$ \cite[2.~Theorem]{chen_wang_yau__Quasilocal_angular_momentum_and_center_of_mass_in_gr}. To the best knowledge of the author, they give the first rigorous proof of $\spartial[t]@\centerz=\nicefrac\impuls m$ for \emph{any} definition of center of mass in the setting of isolated systems in general relativity.\medskip\pagebreak[1]

In this paper, we focus mainly on the abstract CMC definition of center of mass. Thus, the central object is the unique foliation $\{\M<\Hradius>\}_{\Hradius>\Hradius_0}$ near infinity of an asymptotically Schwarzschildean three-dimensional Riemannian manifold $(\outM,\outg*)$ by stable spheres $\M<\Hradius>$ with constant mean curvature. These \emph{CMC spheres} can be indexed in different ways, e.\,g.\ by the area $\vert\M<\Hradius>\vert=4\pi\Hradius^2$. We use the mean curvature, i.\,e.\ the surface $\M<\Hradius>$ is the unique element of the foliation with mean curvature $\H<\Hradius>\equiv\nicefrac{-2}\Hradius+\nicefrac{4m}{\Hradius^2}$. Existence and uniqueness of such a foliation was first proven by Huisken-Yau \cite{huisken_yau_foliation}. Metzger \cite{metzger2007foliations}, Huang \cite{Lan_Hsuan_Huang__Foliations_by_Stable_Spheres_with_Constant_Mean_Curvature}, and Eichmair-Metzger \cite{metzger_eichmair_2012_unique} subsequently weakened the decay and regularity assumptions on the metric $\outg*$. Furthermore, Eichmair-Metzger \cite{metzger_eichmair_2012_unique} proved that the foliation exists for arbitrary dimension $\dim\outM\ge3$. The uniqueness results were generalized by Qing-Tian \cite{qing2007uniqueness}, Metzger \cite{metzger2007foliations}, Huang \cite{Lan_Hsuan_Huang__Foliations_by_Stable_Spheres_with_Constant_Mean_Curvature}. Brendle-Eichmair \cite{brendle2013large} proved that uniqueness is only valid for the whole foliation $\{\M<\Hradius>\}_{\Hradius>\Hradius_0}$, not for a single leaf $\M<\Hradius>$. \smallskip

Huisken-Yau \cite{huisken_yau_foliation} defined what we will call following \cite{cederbaumnerz2013_examples} the coordinate CMC center of mass $\centerz<\Hradius>\in\R^3$ to be the limit of the \emph{Euclidean coordinate center} $\centerz<\Hradius>$ of the leaf $\M<\Hradius>$ as $\Hradius\to\infty$, i.\,e.
\begin{equation}
 \centerz_{\text{CMC}} := \lim_{\Hradius\to\infty}\centerz<\Hradius> \qquad\text{with}\qquad
 \centerz<\Hradius> := \int_{\M<\Hradius>} \outsymbol x_i \d\Haus^2, \labeleq{Euclidean_Center_of_mass}
\end{equation}
where $\outsymbol x$ denotes the chosen coordinates in which the assumed asymptotic decay conditions on the metric $\outg*$ are satisfied and where $\Haus^2$ is the measure induced on $\M<\Hradius>$ by the Euclidean metric ${{\eukg}}$. The coordinate ADM center of mass $\centerz_{\text{ADM}}$ is defined by the limit of a surface integral (on the coordinate sphere $\{\vert\outsymbol x\vert=\rradius\}$ for $\rradius\to\infty$) -- comparable to the ADM mass and ADM linear momentum, see \cite{arnowitt1961coordinate}. \smallskip

The first main result of this paper is that $\spartial[t]@\centerz=\nicefrac\impuls m$ holds for the abstract CMC, the coordinate CMC, and the coordinate ADM center of mass and we prove this in three versions:\nopagebreak
\begin{description}
\item[spacetime version (temporal foliation), abstract]$\,\!$\\
Let $\Phi:I\times\outM\to\uniM$ be an asymptotically flat foliation of spacetime $(\uniM,\unig*)$ by asymptotically Schwarzschildean spacelike hypersurfaces ${\outM[t]}:=\outsymbol\Phi(t,\outM)$. Let $\{\M<\Hradius>[t]\}$ denote the unique foliation of $\outM[t]$ near infinity by stable, closed hypersurfaces ${\M<\Hradius>[t]}$ with constant mean curvature $\H<\Hradius>[t]\equiv\nicefrac{-2}\Hradius+\nicefrac{4m}{\Hradius^2}$. We show in Theorem \ref{evolution_of_the_leaves} that the time-evolution of ${\M<\Hradius>[t]}$ is asymptotically (with respect to $\Hradius$) characterized by a quantity $\aimpuls<\Hradius>[t]$ and by the total mass $\DoATensor m[t]$ of ${\outM[t]}$, where $\aimpuls<\Hradius>[t]$ has to be understood as a pseudo quasi-local linear momentum, see Remark \ref{definition_of_linear_momentum}. If the momentum-density $\momenden[t]:=\outdiv[t](\outH[t]\,\outg[t]*-\outzFund[t]*)$ on ${\outM[t]}$ decays \lq fast enough\rq, then $\aimpuls<\Hradius>[t]$ converge to the linear momentum $\impuls[t]$ of $\outM[t]$ as $\Hradius\to\infty$. In this sense, $\spartial[t]@\centerz=\nicefrac\impuls m$ holds abstractly for the abstract CMC center of mass $\centerz$. \pagebreak[1]
\item[spacetime version (temporal foliation), coordinate]$\,\!$\\
Let $\Phi:I\times\outM\to\uniM$ be an asymptotically flat foliation of spacetime $(\uniM,\unig*)$ by asymptotically Schwarz\-schildean, spacelike hypersurfaces ${\outM[t]}:=\outsymbol\Phi(t,\outM)$. Assume the coordinate CMC center of mass $\centerz[t_0]_{\text{CMC}}$ is well-defined at a time $t_0\in I$ and that $\momenden[t]:=\outdiv[t](\outH[t]\,\outg[t]*-\outzFund[t]*)$ on ${\outM[t]}$ decays \lq fast enough\rq\ for all $t\in I$. We prove in Corollary \ref{evolution_of_the_center}, that under these assumptions the coordinate CMC center of mass $\centerz[t]_{\text{CMC}}$ is well-defined for all times $t\in I$ and satisfies ${\spartial[t]@{\centerz[t]_{\text{CMC}}}}=\nicefrac{\impuls[t]}m$.
\item[initial data set version]$\,\!$\\
There is a well-defined version of the main result for abstract asymptotically Schwarz\-schildean initial data sets (Theorem \ref{inf_evolution_of_the_leaves}).
\end{description}\smallskip

The second main result of this paper states that the coordinate ADM and coordinate CMC center of mass coincides for asymptotically Schwarzschildean manifolds $(\outM,\outg*)$ not fulfilling the assumption of the asymptotic antisymmetry of the scalar curvature: The coordinate CMC center of mass is well-defined if and only if the coordinate ADM center of mass is well-defined; moreover they coincide whenever they are well-defined. In particular, $\spartial[t]@\centerz=\nicefrac\impuls m$ holds also for the ADM center of mass, if it is well-defined and if $\momenden$ decays fast enough. We note that \cite{Lan_Hsuan_Huang__Foliations_by_Stable_Spheres_with_Constant_Mean_Curvature} and \cite{metzger_eichmair_2012_unique} assume weaker decay conditions on the metric $\outg*$ itself than we do, but an need asymptotic antisymmetry assumption on the scalar curvature. In particular, the coordinate CMC center of mass is always defined in their setting \cite[Lemma~F.1]{metzger_eichmair_2012_unique}.


\begin{remark}[The asymptotically flat setting]\label{Asymptotically_flat_setting}
It should be noted that all three version of the first result explained above can be carried over to the setting of an asymptotically flat manifold - see Remark \ref{Asymptotically_flat_setting_2}. This can easily be seen by simply replacing Theorem~\ref{existence}, Proposition~\ref{leaves_are_almost_concentric}, and Lemma~\ref{the_lowest_eigenvalues_of_the_stability_operator} by the newer result \cite[Thm~3.1, Prop.~2.4, and Prop.~2.7]{nerz2014CMCfoliation}, respectively.
\end{remark}

\textbf{Acknowledgment.}
The author wishes to express gratitude to Gerhard Huisken for suggesting this topic, many inspiring discussions and ongoing supervision. Further thanks is owed to Katharina Radermacher for proof-reading. Finally, this paper would not have attained its current form and clarity without the useful suggestions by Carla Cederbaum.
\pagebreak[3]

\section*{Structure of the paper}
In Section \ref{Assumptions_and_notation}, we fix notations and define when a Riemannian manifolds is said to be asymptotically Schwarzschildean. We prove in Section \ref{Additional_regularity} that the leaves of the CMC foliation are not completely off-center, i.\,e.\ there is an estimate on how far away the spheres can be from the coordinate origin (or coordinate CMC center of mass, if it is defined). The three versions of the first main result (Theorem \ref{evolution_of_the_leaves}, Corollary \ref{evolution_of_the_center}, and Theorem \ref{inf_evolution_of_the_leaves}) are stated precisely and proven in Section \ref{The_evolution}. In Section \ref{Calculating_the_Euclidean_coordinate_center_of_a_CMC_leaf_and___}, we prove that existence of the coordinate ADM center is equivalent to existence of the coordinate CMC center and that they coincide, if one and thus both of them exist.

\section{Assumptions and notation}\label{Assumptions_and_notation}
In order to study temporal foliations of four-dimensional spacetimes by three-dimensional spacelike slices and foliations (near infinity) of those slices by two-dimensional spheres, we will have to deal with different manifolds (of different or the same dimension) and different metrics on these manifolds, simultaneously. All four-dimensional quantities like the Lorentzian spacetime $(\uniM,\unig*)$, its Ricci and scalar curvatures $\uniric$ and $\unisc$, and all other derived quantities will carry a hat. In contrast, all three-dimensional quantities like the spacelike slices $(\outM,\outg*)$, its Ricci, scalar, exterior and mean curvature $\outric$, $\outsc$, $\outzFund$ and ${\outH}$, its future-pointing unit normal $\outnu$ and all other derived quantities carry a bar, while all two-dimensional quantities like the CMC leaf $(\M,\g*)$, its Ricci, scalar, exterior and mean curvature $\ric$, $\sc$, $\zFund$ and $\H$, its outer unit normal $\nu$ and all other derived quantities carry neither. 
When different three-dimensional manifolds or metrics are involved, then the upper left index will always denote the (real or artificial, see Construction \ref{Artificial_Lorentzian_manifolds}) time-index of the \lq current\rq\ time-slice. The only exceptions are the upper left indices $\euclideane$, $\schws$, and $a$, which refer to the Euclidean, the Schwarz\-schild, and the artificial metric (see Construction \ref{Artificial_Lorentzian_manifolds}), respectively. If different two-dimensional manifolds or metrics are involved, then the lower left index will always denote the mean curvature index $\Hradius$ of the current leaf $\M<\Hradius>$, i.\,e.\ the leaf with mean curvature $\H<\Hradius>=\nicefrac{{-}2}\Hradius+\nicefrac{4m}{\Hradius^2}$. The two-dimensional manifolds and metrics (and therefore other metric quantities) thereby \lq inherit\rq\ the time-index of the corresponding three-dimensional manifold. We abuse notation and suppress these indices, whenever it is clear from the contest which metric we refer to.

It should be noted that we interpret the second fundamental form and the normal vector of a hypersurface, as well as  the \lq lapse function\rq\ and the \lq shift vector\rq\ of a hypersurfaces arising as a leaf of a given deformation or foliation as quantities on the hypersurfaces (and thus as \lq lower\rq\ dimensional). For example, if $\outM[t]$ is a hypersurface in $\uniM$, then $\outnu[t]$ denotes its normal (and \emph{not} $\unisymbol\nu$). \smallskip\pagebreak[1]

Furthermore, we use upper case latin indices $\ii$, $\ij$, $\ik$, and $\il$ for the two-dimensional range $\lbrace2,3\rbrace$ and lower case latin indices $\oi$, $\oj$, $\ok$, and $\ol$ for the three-dimensional range $\lbrace 1,2,3\rbrace$. The Einstein summation convention is used accordingly. \pagebreak[3]\medskip

As mentioned, we frequently use foliations and evolutions. These are infinitesimally characterized by their lapse functions and their shift vectors.
\begin{definition}[Lapse functions, shift vectors]
Let $\theta>0$, $\sigma_0\in\R$, $I\supseteq\interval{\Hradius_0-\delta\Hradius}{\Hradius_0+\delta\Hradius}$ an interval, and let $(\outM,\outg*)$ be a Riemannian manifold. A smooth map $\DoATensor\Phi:I\times\M\to\outM$ is called \emph{deformation} of the closed hypersurface $\M=\M<\Hradius_0>=\Phi(\Hradius_0,\M)\subseteq\outM$, if $\DoATensor\Phi<\Hradius>(\cdot):=\Phi(\Hradius,\cdot)$ is a diffeomorphism onto its image $\M<\Hradius>:=\DoATensor\Phi<\Hradius>(\M)$ and if $\DoATensor\Phi<\Hradius_0>\equiv\id_{\M}$. The decomposition of $\spartial[\Hradius]@\Phi$ into its normal and tangential parts can be written as
\begin{align*} \partial[\Hradius]@\Phi ={}& {\rnu<\Hradius>}\,{\nu<\Hradius>} + {\rbeta<\Hradius>}, \end{align*}
where $\nu<\Hradius>$ is the outer unit normal to $\M<\Hradius>$. The function $\rnu<\Hradius>:\M<\Hradius>\to\R$ is called the \emph{lapse function} and the vector field $\rbeta<\Hradius>\in\X(\M<\Hradius>)$ is called the \emph{shift} of $\Phi$. If $\Phi$ is a diffeomorphism (resp.\ diffeomorphism onto its image), then it is called a \emph{foliation} (resp.\ a \emph{local foliation}).

In the setting of a Lorentzian manifold $(\uniM,\unig*)$ and a non-compact, spacelike hypersurface $\outM\subseteq\uniM$ the notions of deformation, foliation, lapse $\ralpha$ and shift $\outbeta$ are defined correspondingly.
\end{definition}\pagebreak[3]

As there are different definitions of \lq asymptotically Schwarzschildean\rq, we now describe the asymptotic assumptions we make. To rigorously define these and to shorten the statements in the following, we distinguish between the Riemannian, the initial data, and the foliation case.
\begin{definition}[Schwarzschild quantities]\label{Schwarzschild}
On $\R^3\setminus\{0\}$, the metric $\schwarzg*$ and the lapse function $\ralpha[\schws]*$ of the standard Schwarz\-schild timeslice are defined by
\[
 \schwarzg := (1+\frac m{2\rad})^4\,\eukg, \qquad\qquad
 \ralpha[\schws] := \frac{1-\frac{2m}r}{1+\frac{2m}r},
\]%
where $m>0$, $\eukg$ denotes the Euclidean metric, and $\rad:\R^3\setminus\{0\}\to\interval0\infty:\outsymbol x\mapsto\vert\outsymbol x\vert$ denotes the Euclidean distance to the origin.
\end{definition}\pagebreak[3]

\begin{definition}[\texorpdfstring{$\Ck^k$}{C-k}-{asymptotically} Schwarzschildean Riemannian manifolds]\label{Ck_asymptotically_Schwarzschildean}
Let ${\outve}>0$. A triple $(\outM,\outg,\outsymbol x)$ is called \emph{$\Ck^k$-asymptotically Schwarz\-schildean of order $1+{\outve}$}, if $(\outM,\outg)$ is a smooth manifold and $\outsymbol x:\outM\setminus\overline L\to\R^3$ is a chart of $\outM$ outside a compact set $\overline L$ such that there exists a constant $\oc\ge0$ with
\[
 \vert\frac{\partial*^{|\gamma|}(\outg_{ij}-\schwarzg_{ij})}{\partial*\outsymbol x^\gamma}\vert \le \frac{\oc}{\rad^{1+\vert\gamma\vert+{\outve}}}, \qquad\qquad\forall\,\vert\gamma\vert\le k,
\]%
where $\rad:\outM\setminus\overline L\to\interval0\infty:\outsymbol p\mapsto\vert\outsymbol x(\outsymbol p)\vert$ is the Euclidean coordinate distance to the coordinate origin.
\end{definition}\pagebreak[3]

\begin{definition}[\texorpdfstring{$\Ck^k$}{C-k}-{asymptotically} Schwarzschildean initial data sets]\label{Ck_asymptotically_Schwarzschildean_initial_data_set}
Let ${\outve}>0$ and let $(\outM,\outg*,\outsymbol x,\outzFund,\enden,\outsymbol J)$ be an initial data set, which means that $(\outM,\outg*)$ is a Riemannian manifold, $\outzFund$ a symmetric $(0,2)$-tensor, $\enden$ a function, and $\outsymbol J$ a one-form on $\outM$, respectively, satisfying the \emph{Einstein constraint equations}\footnote{We dropped the physical factor $8\pi$ for notational convenience.}
\[
 \outsc - \outtrtr\outzFund\outzFund + \outmc^2 = 2\enden, \qquad\qquad
 \outdiv(\outmc\;\outg*-\outzFund) = \outsymbol J,
\]
where $\outmc:=\outtr\,\outzFund$. $(\outM,\outg*,\outsymbol x,\outzFund,\enden,\outsymbol J,\ralpha)$ is called \emph{$\Ck^k$-asymptotically Schwarz\-schildean of order $1+{\outve}$} if $(\outM,\outg*,\outsymbol x)$ is $\Ck^k$-asymptotically Schwarz\-schildean of order $1+{\outve}$ and if $\ralpha$ is a function on $\outM$ such that there are constants $\outdelta\in\interval0{1+{\outve}}$ and $\oc\ge0$ with
\begin{equation*}
 \vert\frac{\partial*^{|\gamma|}\,(\ralpha-\ralpha[\schws])}{\partial*\outsymbol x^\gamma}\vert \le \frac{\oc}{\rad^{1+\vert\gamma\vert+{\outve}-\outdelta}}, \quad
 \vert\frac{\partial*^{|\gamma|}\;\outzFund_{ij}}{\partial*\outsymbol x^\gamma}\vert \le \frac{\oc}{\rad^{1+\vert\gamma\vert+\outdelta}} \quad
  \quad\forall\,\vert\gamma\vert<k \labeleq{decay_outzFund}
\end{equation*}
holds (in the chart $\outsymbol x$).
\end{definition}\pagebreak[1]
We remark that the asymptotic decay in \eqref{decay_outzFund} is a generalization of the one frequently used in the literature, where one assumes $\outdelta=1$, i.\,e.\ $\vert\outzFund\vert\le\nicefrac C{\rad^2}$ and $\vert\ralpha-\ralpha[\schws]\vert\le\nicefrac C{\rad^{\outve}}$. In particular, this is the minimal assumption for which one can hope for existence of the ADM linear momentum -- without imposing any asymptotic symmetry assumption as for example the Regge-Teitelboim conditions.\pagebreak[3]

\begin{definition}[\texorpdfstring{$\Ck^k$}{C-k}-{asymptotically} Schwarzschildean temporal foliations]\label{Ck_asymptotically_Schwarzschildean_foliation}
Let ${\outve}>0$. The smooth level sets $\outM[t]:=\time^{-1}(t)$ of a smooth function $\time$ on a four-dimensional Lorentz manifold $(\uniM,\unig*)$ are called a \emph{$\Ck^k$-asymptotically Schwarz\-schildean temporal fo\-li\-a\-tion of order $1+\ve$}, if the gradient of $\time$ is everywhere time-like, and if there is a chart $(\time,\unisymbol x):\unisymbol U\subseteq\uniM\to\R\times\R^3$ of $\uniM$, such that $(\outM[t],\outg[t]*,{\DoAoutTensor x[t]}:=\unisymbol x|_{\outM[t]},\outzFund[t]*,\enden[t],\momenden[t],\ralpha[t])$ is a $\Ck^k$-asymptotically Schwarzschildean initial data set of order $1+{\outve}$ for all $t$ with not necessarily uniform $\oc[t]$ and $\outdelta[t]$. Here, the corresponding second fundamental form ${\outzFund[t]*}$, the lapse function ${\ralpha[t]}$, the energy-density ${\enden[t]}$ and the momentum density ${\momenden[t]}$ are defined by
\begin{align*}
 \ralpha[t] :={}& \sqrt{{-}\unig(\partial[\time],\partial[\time])}, &
 \outzFund[t]_{ij} :={}& \frac{{-}1}{2\ralpha[t]}\partial[t]@{\outg[t]_{ij}}, &
 \left.\partial[\time]\right|_{\outM[t]} ={}& \ralpha[t]\;\outnu[t], \\
 \enden[t] :={}& \uniric(\outnu[t],\outnu[t])+\frac\unisc2, &
 \momenden[t]* :={}& \uniric(\outnu[t],\cdot),
\end{align*}
respectively, where $\outnu[t]$ is the future-pointing unit normal to $\outM[t]$. If the constants $\oc[t]$ and $\outdelta[t]$ of the decay assumptions can be chosen independently of $t$, then the temporal fo\-li\-a\-tion is called \emph{uniformly $\Ck^k$-asymptotically Schwarz\-schildean}.
\end{definition}\pagebreak[3]

It is important to note that the decay assumptions made in Definition \ref{Ck_asymptotically_Schwarzschildean_initial_data_set} do \emph{not} imply that the ADM linear momentum $\impuls$ is well-defined. In particular, there is no hope to prove $\spartial[t]@\centerz=\nicefrac\impuls m$ under these assumptions. This is why we use the pseudo quasi-local linear momentum $\impuls<\Hradius>$ for which we prove $\spartial[t]@\centerz=\nicefrac{\impuls<\Hradius>}m$. This is explained in more detail in Remark \ref{definition_of_linear_momentum}. Beside the obvious advantage of achieving a more general result, the approach with these weak decay assumptions allows us to prove equality of the ADM and CMC center of mass under other assumptions than those in the literature and to use these results in the setting of the examples in \cite{cederbaumnerz2013_examples}. \pagebreak[3]

\begin{remark}[The asymptotically flat setting]\label{Asymptotically_flat_setting_2}
As explained in Remark \ref{Asymptotically_flat_setting}, the results of Section \ref{The_evolution} carry over to the more general setting of $\mathcal C^2_{\frac12+\outve}$-asymptotically flat $(\outM,\outg*,\outsymbol x,\outzFund,\enden,\outsymbol J,\ralpha)$, i.\,e.\ a initial data set $(\outM,\outg*,\outzFund,\enden,\outsymbol J)$ with a chart $\outsymbol x:\outM\setminus\outsymbol L\to\R^3$ such that
\[
 \vert\frac{\partial*^{|\gamma|}(\outg_{ij}-\eukg_{ij})}{\partial*\outsymbol x^\gamma}\vert \le \frac{\oc}{\rad^{\frac12+\vert\gamma\vert+{\outve}}}, \qquad\qquad\forall\,\vert\gamma\vert\le 2,
\]%
and where the second fundamental form $\outzFund$ and the \emph{lapse function} $\ralpha$ satisfy
\[
 \vert\frac{\partial*^{|\gamma|}\,(\ralpha-\ralpha[\schws])}{\partial*\outsymbol x^\gamma}\vert \le \frac{\oc}{\rad^{\frac12+\vert\gamma\vert+\outve}}, \quad
 \vert\frac{\partial*^{|\gamma|}\;\outzFund_{ij}}{\partial*\outsymbol x^\gamma}\vert \le \frac{\oc}{\rad^{\frac32+\vert\gamma\vert+\outve}} \quad
  \quad\forall\,\vert\gamma\vert<2 
\]
\end{remark}

\section{CMC leaves are almost asymptotically concentric}\label{Additional_regularity}
In this section, we assume that $(\outM,\outg*,\outsymbol x)$ is an three-dimensional, asymptotically Schwarz\-schildean Riemannian manifold with mass $m>0$. Due to \cite{metzger2007foliations}, there exists a foliation $\Phi:\interval{\Hradius_0}\infty\times\sphere^2\to\outM$ of $\outM$ near infinity by closed surfaces with constant mean curvature $\H[\Hradius]\equiv\nicefrac{{-}2}\Hradius+\nicefrac{4m}{\Hradius^2}$. We show in this section, that the leaves of the CMC foliation for one fixed time-slice $\outM:=\outM[t]$ are almost concentric, i.\,e.\ the Euclidean coordinate center $\centerz<\Hradius>$ of each leaf $\M<\Hradius>$ (as defined in \eqref{Euclidean_Center_of_mass}) is of order $\Hradius^{1-{\outve}}$. Cederbaum-Nerz \cite{cederbaumnerz2013_examples} constructed examples proving that these results are sharp. It should be mentioned that an additional assumption on the scalar curvature $\outsc$ implies better estimates. More precisely by \cite[Lemma F.1]{metzger_eichmair_2012_unique}, the center of mass is well-defined if $\outsc$ fulfills the Regge-Teitelboim conditions (is asymptotically antisymmetric) and Corollary \ref{euclidean_coordinate_center_of_a_leave} then ensures that the Euclidean coordinate centers are bounded.\smallskip

To prove that a given asymptotically Schwarzschildean three-dimensional Riemannian manifold $(\outM,\outg*)$ can be foliated by stable CMC spheres, Metzger defined artificial metrics ${\aoutg[\tau]}:=\schwarzg*+\tau(\outg*-\schwarzg*)$ and proved that the set 
\[ I := \lbrace \tau\in\interval*0*1 \;\middle|\; \text{a CMC foliation near infinity exists with respect to } \aoutg[\tau]* \rbrace \]
is non-empty, open and closed in (and therefor equal to) $\interval*0*1$ \cite{metzger2007foliations}. In doing so, he has to prove several decays conditions of these spheres. All in all, we cite the following special case:
\begin{theorem}[Special case of {\cite[Thm.~1.1]{metzger2007foliations}}]\label{existence}
There exists a constant $\Hradius_0=\Cof{\Hradius_0}[m][{\outve}][\oc]{<\infty}$ and a $\Ck^1$-map $\aoutsymbol\Phi:\interval*0*1\times\interval{\Hradius_0}\infty\times\mathcal S^2\to\outM$ such that the \emph{geometric spheres} $\M<\Hradius>[\atime\,]:=\aoutsymbol\Phi(\atime,\Hradius,\mathcal S^2)$ are stable, have constant mean curvature $\H<\Hradius>[\atime]\equiv\nicefrac{{-}2}\Hradius+\nicefrac{4m}{\Hradius^2}$ with respect to the artificial metric $\aoutg[\atime]*:=\schwarzg+\atime\,(\outg*-\schwarzg)$. Furthermore, there is a constant $C=\Cof[m][{\outve}][\oc]$ not depending on $\atime$ or $\Hradius$ such that
\begin{equation*}
 \vert\volume{\M<\Hradius>[\atime]}-4\pi\Hradius^2\vert \le C\Hradius, \qquad
 \Vert\zFundtrf<\Hradius>[\atime]_{}\Vert_{\Lp^\infty(\M<\Hradius>[\atime])} + \Vert\levi<\Hradius>[\atime]*\zFundtrf<\Hradius>[\atime]_{}\Vert_{\Lp^2(\M<\Hradius>[\atime])} \le \frac C{\Hradius^2}, \labeleq{estimates_for_k}
\end{equation*}
where $\zFundtrf<\Hradius>[\atime]_{}:=\zFund<\Hradius>[\atime]_{}-\nicefrac12\;\H<\Hradius>[\atime]\;\g<\Hradius>[\atime]*$ denotes the trace-free part of the exterior curvature ${\zFund<\Hradius>[\atime]}$, ${\g<\Hradius>[\atime]*}$ denotes the induced metric -- both with respect to $\aoutg[\atime]*$ -- and $\levi<\Hradius>[\atime]*$ is the Levi-Civita connection. Additionally, there is a constant $C_S=\Cof{\c_S}[m][{\outve}][\oc]$ not depending on $\atime$ or $\Hradius$ such that
\begin{equation*}
 \Vert f\Vert_{\Lp^2(\M<\Hradius>[\atime])}
	\le C_S(\Hradius\Vert\vert\levi<\Hradius>[\atime]*f\vert\Vert_{\Lp^1(\M<\Hradius>[\atime])} + \Vert f\Vert_{\Lp^1(\M<\Hradius>[\atime])}) \qquad\qquad\forall\,f\in\Ck^1(\M<\Hradius>[\atime]).
 \labeleq{Erst e_Sobolev}
\end{equation*}
Furthermore, $\DoAaoutTensor\Phi[\atime]:=\aoutsymbol\Phi(\atime,\cdot,\cdot)$ is a $\Ck^1$-foliation of $\outM$ near infinity.
\end{theorem}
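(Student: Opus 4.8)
The plan is to obtain this as the quantitative special case of \cite[Thm.~1.1]{metzger2007foliations} that results from running Metzger's continuity argument in the parameter $\atime$ and checking that every constant can be taken uniform in $\atime\in\interval*0*1$. The observation that makes this uniformity essentially automatic is that the family $\aoutg[\atime]*:=\schwarzg+\atime\,(\outg*-\schwarzg)$ is \emph{simultaneously} $\Ck^k$-asymptotically Schwarzschildean of order $1+{\outve}$ with one and the same constant: since $\aoutg[\atime]*-\schwarzg=\atime\,(\outg*-\schwarzg)$ and $0\le\atime\le1$, the decay bounds of Definition~\ref{Ck_asymptotically_Schwarzschildean} hold for $\aoutg[\atime]*$ with the same $\oc$ as for $\outg*$. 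Consequently every estimate in the construction that depends only on $m$, ${\outve}$ and the asymptotic constant holds for all $\aoutg[\atime]*$ with the same value, and the critical radius $\Hradius_0$ beyond which the construction runs can be chosen independently of $\atime$.

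The continuity method is driven by a~priori estimates, so I would first establish these: whenever a stable CMC foliation near infinity exists with respect to $\aoutg[\atime]*$, its leaves satisfy \eqref{estimates_for_k} and \eqref{Erst e_Sobolev} with constants independent of $\atime$ and $\Hradius$. For the area bound $\vert\volume{\M<\Hradius>[\atime]}-4\pi\Hradius^2\vert\le C\Hradius$ and the decay of $\zFundtrf<\Hradius>[\atime]$ and $\levi<\Hradius>[\atime]*\zFundtrf<\Hradius>[\atime]$ I would argue, as in \cite{huisken_yau_foliation,metzger2007foliations}, by combining the constancy of $\H<\Hradius>[\atime]$ with the stability inequality and a Simons-type identity for $\zFundtrf<\Hradius>[\atime]$, using the explicit decay of $\aoutg[\atime]*$ towards $\schwarzg$; all the inputs are $\atime$-uniform by the first paragraph, hence so is $C$. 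For \eqref{Erst e_Sobolev} I would invoke the Michael--Simon inequality on $\M<\Hradius>[\atime]$, whose constant depends only on an $\Lp^\infty$-bound for the mean curvature (here of size $\nicefrac1\Hradius$) and is therefore $\atime$-uniform as well.

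With these estimates in hand, I would run the continuity method. Set $I:=\{\atime\in\interval*0*1\mid\text{a stable CMC foliation near infinity exists with respect to }\aoutg[\atime]*\}$. One has $0\in I$, since for $\atime=0$ the Euclidean coordinate spheres $\{\rad=\Hradius\}$ are, after reparametrisation, the explicit stable CMC foliation of the Schwarzschild slice. Openness of $I$ comes from the implicit function theorem applied to the map assigning to a graph function $f$ over a leaf $\M<\Hradius>[\atime]$ the difference between the mean curvature of its graph and $\nicefrac{{-}2}\Hradius+\nicefrac{4m}{\Hradius^2}$: its linearisation at $f=0$ is the Jacobi operator, which by stability together with the decay of $\zFundtrf<\Hradius>[\atime]$ is invertible with $\atime$-uniformly bounded inverse between the relevant weighted spaces once $\Hradius_0$ is large. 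Closedness of $I$ comes from the uniform estimates: for $\atime_j\to\atime_\infty$ in $I$, the bounds \eqref{estimates_for_k}--\eqref{Erst e_Sobolev} hold uniformly in $j$, so the corresponding leaves subconverge in $\Ck^{1,\alpha}$ to a stable CMC foliation near infinity with respect to $\aoutg[\atime_\infty]*$, whence $\atime_\infty\in I$. By connectedness of $\interval*0*1$ it follows that $I=\interval*0*1$, and $\atime=1$ gives the CMC foliation of $(\outM,\outg*)$. For the regularity of $\aoutsymbol\Phi$, writing each leaf as a graph over a Schwarzschild coordinate sphere and using that $\aoutg[\atime]*$ depends affinely, hence smoothly, on $\atime$ while the prescribed-mean-curvature equation depends smoothly on $\Hradius$, the implicit function theorem yields $\Ck^1$-dependence (indeed better) of the graph functions, hence of $\aoutsymbol\Phi$, on $(\atime,\Hradius,\cdot)$; that each $\aoutsymbol\Phi(\atime,\cdot,\cdot)$ is a genuine foliation near infinity follows from \eqref{estimates_for_k}, which forces the leaves to be nearly round and hence to exhaust a neighbourhood of infinity disjointly.

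I expect the main obstacle to be establishing the a~priori estimates \eqref{estimates_for_k} with a constant that does not degenerate as $\atime\to1$ (equivalently, the closedness step); once the uniform asymptotics of the $\aoutg[\atime]*$ have been recorded this goes through exactly as in Metzger's proof, but it is where essentially all the analytic work sits. Everything else is either explicit (the endpoint $\atime=0$), a routine application of the implicit function theorem (openness and the $\Ck^1$-dependence), or a standard geometric inequality (the Michael--Simon estimate \eqref{Erst e_Sobolev}).
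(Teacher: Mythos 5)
Your proposal is correct and follows essentially the same route as the paper, which does not prove this statement itself but cites it as a special case of Metzger's Theorem 1.1 — whose proof is precisely the continuity method in $\atime$ over the artificial metrics $\aoutg[\atime]*$ (non-empty, open, closed in $\interval*0*1$) that you reconstruct, with the key uniformity observation that the decay constants of $\aoutg[\atime]*$ can be taken independent of $\atime\in\interval*0*1$. Nothing further is needed beyond what you sketch and what is in the cited reference.
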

By using DeLellis-M{\"u}ller \cite{DeLellisMueller_OptimalRigidityEstimates}, Metzger concluded that for any $\atime\in\interval*0*1$ and $\Hradius>\Hradius_0$ there is a conformal parametrization $\DoATensor\psi<\Hradius>[\atime]:\mathcal S^2\to\M<\Hradius>[\atime]$ with
\begin{align*}
 \Vert \DoATensor\psi<\Hradius>[\atime] - (\centerz<\Hradius>[\atime]+\Hradius\,\id) \Vert_{\Hk^2(\mathcal S^2)} \le{}& C\Hradius, &
 \Vert h^2 - 1\Vert_{\Hk^1(\mathcal S^2)} \le{}& C, \labeleq{estimates_on_conformal_parametrization}\\
 \Vert N - \normal<\Hradius>[\atime]\circ\DoATensor\psi<\Hradius>[\atime]\Vert_{\Hk^1(\mathcal S^2)} \le{}& C, \labeleq{estimates_on_the_normal}
\end{align*}
where $\id:=\outsymbol x^{-1}\circ\mathcal i$, $\mathcal i$ is the standard embedding of the Euclidean sphere $\mathcal S^2_\sigma(0)$ to $\R^3$, $\centerz<\Hradius>[\atime]\in\R^3$ is the Euclidean coordinate center of $\M<\Hradius>[\atime]$, $\DoATensor<\Hradius>[\atime]h$ is the conformal factor $\DoATensor\psi<\Hradius>[\atime]^*\g[\euce]*=\DoATensor<\Hradius>[\atime] h^2\;\g<\Hradius>[\atime]*$ for the standard metric $\g[\euce]*$ on $\mathcal S^2$, $N$ is the Euclidean outer unit normal to $\mathcal S^2\subseteq\R^3$, $\normal<\Hradius>[\atime]$ is the normal to $\M<\Hradius>[\atime]$ with respect to $\aoutg[\atime]*$ and the Sobolev norms are defined in Definition \ref{sobolev_norms}. Note that, in contrast to \cite{metzger2007foliations}, we use scale-invariant Sobolev norms.
\begin{definition}[Sobolev norms]\label{sobolev_norms}
For any tensor $T$, $k\ge1$ and $p\in\interval*1*\infty$ the Sobolev norm on the surface $\M<\Hradius>[\atime]$ is iteratively defined by
\[
 \Vert T\Vert_{\Wkp^{k,p}(\M<\Hradius>[\atime])} := \Vert\vert T\vert_{\g<\Hradius>[\atime]*}\Vert_{\Lp^p(\M<\Hradius>[\atime])} + \Hradius\Vert\levi*T\Vert_{\Wkp^{k-1,p}(\M<\Hradius>[\atime])},
\]
where $\Wkp^{0,p}(\M<\Hradius>[\atime]):=\Lp^p(\M<\Hradius>[\atime])$. Additionally, $\Hk^k(\M<\Hradius>[\atime]):=\Wkp^{k,2}(\M<\Hradius>[\atime])$ in the case $p=2$.
\end{definition}
We mention that Metzger also proved that the Euclidean coordinate centers of the leaves are of order $\Hradius$, with controlled constant $<1$. As this is not enough for our context, we prove the following inequality.
\begin{proposition}[Leaves are almost concentric]\label{leaves_are_almost_concentric}
Let $(\outM,\outg*,\outsymbol x)$ be $\Ck^2$-asymptotically Schwarzschildean of order $1+{\outve}$ with ${\outve}>0$. There are constants $C=\Cof[m][{\outve}][\oc]$ and $\Hradius_0=\Cof{\Hradius_0}[m][{\outve}][\oc]$ such that the Euclidean coordinate center $\centerz<\Hradius>$ of each $\M<\Hradius>$ with $\Hradius\ge\Hradius_0$ fulfills
\[ \vert\centerz<\Hradius>\vert \le C\Hradius^{1-{\outve}}, \]
where $\lbrace\M<\Hradius>\rbrace_\Hradius$ denotes the foliation of $\outM$ near infinity by stable spheres $\M<\Hradius>$ with constant mean curvature ${\H[\Hradius]}\equiv\nicefrac{{-}2}\Hradius+\nicefrac{4m}{\Hradius^2}$.
\end{proposition}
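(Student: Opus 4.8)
The plan is to exploit the Euler--Lagrange equation satisfied by each CMC leaf $\M<\Hradius>$, namely $\H<\Hradius>\equiv\nicefrac{-2}\Hradius+\nicefrac{4m}{\Hradius^2}$, and to test it against the coordinate functions $\outsymbol x^i$, which (up to the conformal factor of the Schwarzschild background) are close to the lowest nonconstant eigenfunctions of the round sphere. Concretely, I would start from the known asymptotics in \eqref{estimates_on_conformal_parametrization}: pulling back to $\sphere^2$ via the conformal parametrization $\DoATensor\psi<\Hradius>$, the embedding decomposes as $\DoATensor\psi<\Hradius>=\centerz<\Hradius>+\Hradius\,\id+E<\Hradius>$ with $\Vert E<\Hradius>\Vert_{\Hk^2(\sphere^2)}\le C\Hradius$, so the vector $\centerz<\Hradius>\in\R^3$ is (to leading order) the $\ell=1$ Fourier mode of $\DoATensor\psi<\Hradius>$; the task is to show this mode is of size $O(\Hradius^{1-{\outve}})$ rather than merely $O(\Hradius)$.

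The key computation is to write the mean-curvature operator of $\M<\Hradius>$ in the chart $\outsymbol x$ as the Euclidean mean curvature plus correction terms. Since $\outg*=\schwarzg*+O(\rad^{-1-{\outve}})$ and $\schwarzg*=(1+\tfrac m{2\rad})^4\eukg$, on a sphere of radius $\sim\Hradius$ one has $\schwarzg*-\eukg*=\tfrac{2m}\Hradius\eukg*+O(\Hradius^{-2})$, a term that is \emph{radially symmetric} and hence, after one integrates against the $\ell=1$ modes $\outsymbol x^i/\rad$, produces no net contribution at order $\Hradius^{-1}$ — this is exactly why the Schwarzschild part of the metric does not move the center. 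What remains is the genuinely asymptotically Schwarzschildean error $\outg*-\schwarzg*$, of order $\Hradius^{-1-{\outve}}$ with derivative bounds from Definition \ref{Ck_asymptotically_Schwarzschildean}, together with the quadratic curvature terms $\zFundtrf<\Hradius>$ controlled by \eqref{estimates_for_k}. I would expand $\int_{\M<\Hradius>}\bigl(\H<\Hradius>-\H[\eukg]<\Hradius>\bigr)\,\outsymbol x^i\,\d\Haus^2$, use that $\H<\Hradius>$ is constant while $\int_{\M<\Hradius>}\H[\eukg]<\Hradius>\,\outsymbol x^i\,\d\Haus^2$ encodes the Euclidean center $\centerz<\Hradius>$ via the Minkowski-type identity (the Euclidean first variation of area applied to translations), and estimate every error term by the decay assumptions combined with the area bound $\vert\M<\Hradius>\vert\le C\Hradius^2$ from \eqref{estimates_for_k}, the Sobolev inequality \eqref{Erst e_Sobolev}, and the parametrization bounds \eqref{estimates_on_conformal_parametrization}--\eqref{estimates_on_the_normal}. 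The upshot should be a scalar identity of the schematic form $c_0\Hradius^{-2}\,\centerz<\Hradius>_i = O(\Hradius^{-1-{\outve}})$ for a universal constant $c_0\neq0$, which rearranges to $\vert\centerz<\Hradius>\vert\le C\Hradius^{1-{\outve}}$.

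The main obstacle I anticipate is bookkeeping the error terms at the correct order: one must be careful that after testing against $\outsymbol x^i$ the "large" contributions of size $\Hradius\cdot\Hradius^{-1}=O(1)$ (per unit area, hence $O(\Hradius^2)$ after integration, and $O(\Hradius^2)$ after one more factor of $\Hradius$ from $\outsymbol x^i$, i.e.\ $O(\Hradius^3)$) all cancel by the radial symmetry of $\schwarzg*$ and by the fact that $\centerz<\Hradius>+\Hradius\,\id$ is, up to conformal rescaling, a \emph{Euclidean} round sphere for which the relevant integral identity is exact. Only the $\Hradius^{-1-{\outve}}$-error of $\outg*-\schwarzg*$, the $\Hradius^{-2}$-size of $\zFundtrf<\Hradius>$, and the $O(\Hradius)$-size of $E<\Hradius>$ in $\Hk^2$ survive, and tracking that these combine to give a bound of order $\Hradius^{1-{\outve}}$ on $\centerz<\Hradius>_i$ (and \emph{not} $\Hradius$) is the crux. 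A secondary subtlety is that the conformal factor $\DoATensor<\Hradius>h$ satisfies only $\Vert\DoATensor<\Hradius>h^2-1\Vert_{\Hk^1}\le C$, so the pushforward of the round measure onto $\M<\Hradius>$ differs from $\d\Haus^2$ by an $O(1)$ (in $\Hk^1$) factor; one absorbs this by testing with functions in the kernel of the round stability operator and using orthogonality, or equivalently by noting that such errors are lower-order once paired against the $\ell=1$ modes. All of these are routine once the leading cancellation is isolated, so I expect the proof to be short modulo this one structural observation.
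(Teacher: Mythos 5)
There is a genuine gap at the crux of your outline: you claim that the Schwarzschild part of the metric ``does not move the center'' because on the leaf it looks like $\frac{2m}{\sigma}$ times the Euclidean metric, radially symmetric, hence cancels when tested against the $\ell=1$ modes, so that only the error terms ($\overline g-g^{\mathcal S}$ of size $\sigma^{-1-\ve}$, the trace-free curvature of size $\sigma^{-2}$, the parametrization error) survive; and you then assert a schematic identity $c_0\sigma^{-2}z_i=O(\sigma^{-1-\ve})$ with a \emph{universal} $c_0\neq0$ without exhibiting where $c_0$ comes from. It cannot come from the Euclidean Minkowski-type identity or from the error terms: in flat space ($m=0$) the surfaces of constant Euclidean mean curvature $-2/\sigma$ are round spheres with completely arbitrary centers, so any bound on $z_\sigma$ must degenerate as $m\to0$, and the coercivity constant is necessarily proportional to $m$. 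The term you need is produced precisely by the Schwarzschild part interacting with the off-centering: $m/r$ is radial about the coordinate \emph{origin}, not about the center $z_\sigma$ of the leaf, and on a sphere of radius $\sigma$ about $z_\sigma$ its $\ell=1$ component (taken about $z_\sigma$) is $\approx -m\langle z_\sigma,\omega\rangle/\sigma^{2}$, which feeds an $\ell=1$ term of size $\sim m|z_\sigma|/\sigma^{3}$ into the mean curvature; setting the $\ell=1$ projection of the CMC equation against the $O(\sigma^{-2-\ve})$ errors is what gives $m|z_\sigma|\lesssim\sigma^{1-\ve}$. Your radial-symmetry cancellation, taken at face value, deletes exactly this term and leaves the vacuous statement $0=O(\sigma^{1-\ve})$. (In the paper this same coercivity appears as the eigenvalue $\approx 6m/\sigma^{3}$ of the stability operator on the translational modes, Lemma \ref{the_lowest_eigenvalues_of_the_stability_operator}, and its $m$-dependence is essential.)

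Beyond this, note that your route is genuinely different from the paper's: the paper does not project the CMC equation onto translations on the given manifold, but runs a continuity argument in Metzger's interpolation parameter $\tau$ for the metrics $g^{\mathcal S}+\tau(\overline g-g^{\mathcal S})$, showing that the set of $(\tau,\sigma)$ with $|z_\sigma^\tau|<c\,\sigma^{1-\ve}$ is open and closed; openness is obtained by estimating the lapse functions of the foliation in the $\sigma$- and $\tau$-directions (Lemmas \ref{regularity_of_the_radial_lapse_function} and \ref{regularity_of_the_spacetime_lapse_function}, via Proposition \ref{change_of_meancurvature} and the artificial spacetime of Construction \ref{Artificial_Lorentzian_manifolds}) against the spectral information of Lemma \ref{the_lowest_eigenvalues_of_the_stability_operator}, and then converting lapse bounds into bounds on $\partial_\tau z_\sigma^\tau$ and $\partial_\sigma z_\sigma^\tau$ by Proposition \ref{the_moving_of_the_spheres_by_the_lapse_function}. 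A direct projection argument of the kind you sketch can be made to work (it is close in spirit to Huang, to Eichmair--Metzger, and to the reference invoked in Remark \ref{Asymptotically_flat_setting}), but besides restoring the coercive $m$-term you must also handle that a priori one only knows $|z_\sigma|\le c\,\sigma$ with $c<1$, so the expansion of $m/|x|$ about the off-center point has to be controlled uniformly for $|z_\sigma|/\sigma$ bounded away from $1$; as written, the missing identification of the coercive term is a structural gap, not bookkeeping.
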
\pagebreak[3]
In analogy to Metzger's proof of \cite[Thm.~1.1]{metzger2007foliations}, we prove Proposition \ref{leaves_are_almost_concentric} by showing that the set of \lq almost concentric spheres\rq\ is non-empty, open and closed in (and therefor equal to) $\interval*0*1\times\interval{\Hradius_0}\infty$ for some $\Hradius_0<\infty$. Thus, we define for $c\ge0$
\[ I_{\c} := \lbrace (\atime,\Hradius)\in\interval*0*1\times\interval{\Hradius_0}\infty \;\middle|\; {\centerz<\Hradius>[\atime]}<\c\,\Hradius^{1-{\outve}}\rbrace. \]
Therefore, $(\tau,\sigma)\in I_{\c}$ corresponds to a quantitative estimate of being almost concentric.
As the CMC foliation for the Schwarzschild metric ($\atime=0$) is given by the concentric spheres $\mathcal S^2_{\rradius(\Hradius)}(0)$, we see that $I_{\c}\supseteq\lbrace0\rbrace\times\interval{\Hradius_0}\infty$ for any $\c\ge0$. By continuity of $\aoutsymbol\Phi$ (see Theorem \ref{existence}), we conclude that $I_{\c}$ is closed.

To show that $I_{\c}$ is open in $\interval*0*1\times\interval{\Hradius_0}\infty$ for some ${\c}<\infty$, we prove estimates on the evolution of the CMC spheres $\M<\Hradius>[\atime]$ in \lq$\tau$-direction\rq. To do so, we have to show inequalities for the corresponding lapse function. As we will show, the lapse function is defined by its image under the stability operator. The stability operator of a surface $\M^n\hookrightarrow\R^{n+1}$ can be defined (or interpreted) as the linearization at $0$ of the graph mean curvature map $\Hk^2(\M)\mapsto\Lp^2(\M):f\mapsto\H(\graph f)$, where $\H(\graph f)$ denotes the mean curvature of the $\graph f:=\lbrace p+f(p)\;\nu\,|\,p\in\M\rbrace$. This map is well-defined if $\Vert f\Vert_{\Hk^2(\M)}$ is small and $\M$ regular. It is well-known that the stability operator can be written in the following form.
\begin{definition}[The stability operator]
The stability operator $\jacobiext<\Hradius>[\atime]*:\Hk^2(\M<\Hradius>[\atime])\to\Lp^2(\M<\Hradius>[\atime])$ on the surface $\M<\Hradius>[\atime]$ is given by -- omitting the $\atime$ and $\Hradius$ indices --
\[ \jacobiext*f := \laplace f + (\vert\zFund\vert_{\g*}^2+\aoutric(\normal,\normal))f, \]
where $\aoutric$ is the Ricci curvature of $\outM$ and $\normal$ is the normal to $\M$ -- both with respect to the metric $\aoutg[\atime]*$. 
\end{definition}
To conclude estimates for the stability operator, we have to control its eigenvalues.
\begin{lemma}[Smallest eigenvalues of the stability operator]\label{the_lowest_eigenvalues_of_the_stability_operator}
There are constants $C=\Cof[m][{\outve}][\oc][\c]$ and $\Cof{\Hradius_0}[m][{\outve}][\oc][\c]$ such that every eigenvalue $\lambda$ of the stability operator $\jacobiext<\Hradius>[\atime]*$ on $\M<\Hradius>[\atime]$ with $(\atime,\Hradius)\in I_{\c}$ and $\Hradius>\Hradius_0$ fulfills
\[ \vert\lambda\vert \le \frac1{\Hradius^2} \iff \vert\lambda-\frac{6m}{\Hradius^3}\vert \le \frac C{\Hradius^{3+{\outve}}}. \]
\end{lemma}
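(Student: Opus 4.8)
The plan is to compare $\jacobiext<\Hradius>[\atime]*$ with its counterpart on exact Schwarzschild (the case $\atime=0$, where each leaf $\M<\Hradius>[0]$ is a centered coordinate sphere and every quantity is explicit) and to extract two facts for all $(\atime,\Hradius)\in I_\c$ with $\Hradius\ge\Hradius_0$ large: a spectral gap --- one eigenvalue near $2\Hradius^{-2}$, exactly three eigenvalues in $(-\Hradius^{-2},\Hradius^{-2})$, and all others below $-4\Hradius^{-2}$ --- and that the three middle eigenvalues lie within $C\Hradius^{-3-\outve}$ of $6m\Hradius^{-3}$. Granted these, the claim is formal: the interval $[\,6m\Hradius^{-3}-C\Hradius^{-3-\outve},\,6m\Hradius^{-3}+C\Hradius^{-3-\outve}\,]$ sits inside $[-\Hradius^{-2},\Hradius^{-2}]$ once $\Hradius\ge\Hradius_0$, and the latter contains no eigenvalue other than the three middle ones, so the eigenvalues with $|\lambda|\le\Hradius^{-2}$ are exactly those with $|\lambda-6m\Hradius^{-3}|\le C\Hradius^{-3-\outve}$. (We may assume $\outve\le1$, replacing $\outve$ by $\min\{\outve,1\}$ otherwise.)

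For the gap, write on the leaf $\jacobiext<\Hradius>[\atime]*=\laplace<\Hradius>[\atime]*+V$ with potential $V=\tfrac12(\H<\Hradius>[\atime])^2+|\zFundtrf<\Hradius>[\atime]|_{\g<\Hradius>[\atime]*}^2+\aoutric[\atime](\nu<\Hradius>[\atime],\nu<\Hradius>[\atime])$. Because the mean curvature is prescribed, $\tfrac12(\H<\Hradius>[\atime])^2=2\Hradius^{-2}-8m\Hradius^{-3}+8m^2\Hradius^{-4}$ is an exact constant; $|\zFundtrf<\Hradius>[\atime]|$ is of order $\Hradius^{-2}$ by \eqref{estimates_for_k}; and since $\aoutg[\atime]*-\schwarzg=\atime(\outg*-\schwarzg)$ is $\Ck^2$-controlled of order $1+\outve$ while $\outric[\schws]$ has components of size $m\Hradius^{-3}$ on the region (where $\rad$ is comparable to $\Hradius$) swept out by $\M<\Hradius>[\atime]$, one gets $|\aoutric[\atime](\nu<\Hradius>[\atime],\nu<\Hradius>[\atime])|\le C\Hradius^{-3}$. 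Thus $V=2\Hradius^{-2}+O(\Hradius^{-3})$, uniformly in $\atime$. By DeLellis--M\"uller \cite{DeLellisMueller_OptimalRigidityEstimates} (in the form \eqref{estimates_on_conformal_parametrization}--\eqref{estimates_on_the_normal}) the leaf is, after rescaling, $\Hk^2$-close to a round coordinate sphere of radius comparable to $\Hradius$, so $\lambda_\ell(\laplace<\Hradius>[\atime]*)=-\ell(\ell+1)\Hradius^{-2}(1+o(1))$ for each fixed $\ell$. Combined with Weyl's inequality $\lambda_k(A+B)\le\lambda_k(A)+\lambda_0(B)$, this gives $\lambda_4(\jacobiext<\Hradius>[\atime]*)\le-4\Hradius^{-2}(1+o(1))<-\Hradius^{-2}$, while $\lambda_0(\jacobiext<\Hradius>[\atime]*)$ is at least the average of $V$ and hence exceeds $\Hradius^{-2}$, for $\Hradius\ge\Hradius_0$; therefore exactly three eigenvalues lie in $(-\Hradius^{-2},\Hradius^{-2})$.

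To locate those three I would use approximate eigenfunctions built from the coordinate fields. Put $u_i:=\aoutg[\atime](\partial[\outsymbol x^i],\nu<\Hradius>[\atime])$ on $\M<\Hradius>[\atime]$. Since $\aoutg[\atime]$ is $\Ck^0$-close to the Euclidean metric and $\M<\Hradius>[\atime]$ is close to a round coordinate sphere of radius comparable to $\Hradius$ centered at $\centerz<\Hradius>[\atime]$, the $u_i$ are almost orthogonal, $\int_{\M<\Hradius>[\atime]}u_iu_j\approx\tfrac{4\pi}3\Hradius^2\delta_{ij}$; in particular they span a three-dimensional space and $\|u_i\|_{\Lp^2(\M<\Hradius>[\atime])}$ is of order $\Hradius$. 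The first variation of the mean curvature under the deformation of $\M<\Hradius>[\atime]$ by the ambient field $\partial[\outsymbol x^i]$ reads $\partial[s]@{\H<\Hradius>[\atime]}=\jacobiext<\Hradius>[\atime]*(u_i)$ --- the tangential part of $\partial[\outsymbol x^i]$ only contributes a multiple of the tangential derivative of $\H<\Hradius>[\atime]$, which vanishes by the CMC property. I would estimate the right side by successive replacement: passing from $\aoutg[\atime]*$ to $\schwarzg$ costs $O(\Hradius^{-3-\outve})$ in $\Lp^\infty$ (the difference is a second-order expression in $\aoutg[\atime]*-\schwarzg$ and so decays like $\rad^{-3-\outve}$ on the leaf), up to a shift term which is also $O(\Hradius^{-3-\outve})$ since $\M<\Hradius>[\atime]$ has $\schwarzg$-mean curvature within $O(\Hradius^{-2-\outve})$ of a constant; passing from $\M<\Hradius>[\atime]$ to the Schwarzschild coordinate sphere of radius $\Hradius$ centered at $\centerz<\Hradius>[\atime]$ costs a further $O(\Hradius^{-3-\outve})$, and here the hypothesis $(\atime,\Hradius)\in I_\c$, i.e.\ $|\centerz<\Hradius>[\atime]|<\c\,\Hradius^{1-\outve}$, is exactly what bounds the off-center corrections of $\outric[\schws]$ and of $\partial[\outsymbol x^i]$ by $O(\Hradius^{-3-\outve})$; finally a direct computation in Schwarzschild --- differentiating the mean curvature of a coordinate sphere in the direction of a translation of its center --- gives $\partial[s]@{\H<\Hradius>[\atime]}=6m\Hradius^{-3}u_i+O(m\Hradius^{-3-\outve})$ there. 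Collecting the errors, $\|\jacobiext<\Hradius>[\atime]*(u_i)-6m\Hradius^{-3}u_i\|_{\Lp^2(\M<\Hradius>[\atime])}\le C\Hradius^{-3-\outve}\|u_i\|_{\Lp^2(\M<\Hradius>[\atime])}$, and by the almost-orthogonality the same bound (with a larger $C$) holds on the whole span of $u_1,u_2,u_3$. Applying the min--max principle to the nonnegative operator $(\jacobiext<\Hradius>[\atime]*-6m\Hradius^{-3})^2$ then shows that $\jacobiext<\Hradius>[\atime]*$ has at least three eigenvalues, counted with multiplicity, in $[\,6m\Hradius^{-3}-C\Hradius^{-3-\outve},\,6m\Hradius^{-3}+C\Hradius^{-3-\outve}\,]$; this interval lies in $(-\Hradius^{-2},\Hradius^{-2})$, which by the previous step contains exactly three eigenvalues, so these are precisely the three middle ones.

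The main difficulty is the bookkeeping in the last step: one genuinely needs the exponent $\Hradius^{-3-\outve}$ rather than merely $\Hradius^{-2-\outve}$, which forces simultaneous use of the fact that $\H<\Hradius>[\atime]$ equals \emph{exactly} the prescribed constant (so $\tfrac12(\H<\Hradius>[\atime])^2$ is an exact constant and the $\schwarzg$-shift term is $O(\Hradius^{-3-\outve})$), of the full $\Ck^2$-decay of $\aoutg[\atime]*-\schwarzg$, of the almost-concentricity $(\atime,\Hradius)\in I_\c$, and of the DeLellis--M\"uller closeness of $\M<\Hradius>[\atime]$ to a coordinate sphere --- together with the explicit Schwarzschild translation computation, which is what produces the constant $6m\Hradius^{-3}$. (On $\atime=0$ one may alternatively obtain that constant by a bare-hands spectral computation on the centered coordinate sphere, where $\jacobiext<\Hradius>[0]*$ is a Laplacian plus a constant by rotational symmetry, and then perturb in $\atime$ and in the center.) Everything else is soft, since the gap is stable under $O(\Hradius^{-3})$ perturbations of a well-separated Laplacian-plus-constant.
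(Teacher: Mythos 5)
Your proposal is correct in outline, but it reaches the statement by a genuinely different mechanism than the paper. The paper treats the stability operator as (Laplacian of the leaf) plus (almost-constant potential): it first shows $\Vert\overline{\operatorname{Ric}}(\nu,\nu)+2m\sigma^{-3}\Vert_{\Lp^4}\le C\sigma^{-5/2-\varepsilon}$ using exactly the ingredients you identified (the normal estimate \eqref{estimates_on_the_normal} and the almost-concentricity $|\boldsymbol z|\le c\,\sigma^{1-\varepsilon}$ from $I_c$), then asserts -- via \eqref{estimates_on_conformal_parametrization} and the metric decay -- that the first three nonzero Laplace eigenvalues of the leaf agree with $\tfrac{2}{\sigma^2}(1-\tfrac{2m}{\sigma})$ up to $C\sigma^{-3-\varepsilon}$, and finally adds the exactly constant $\tfrac12\mathcal H^2$ and the $|\mathring{\mathrm k}|^2$-term from \eqref{estimates_for_k}; the sharp rate thus rests on a delicate eigenvalue-perturbation statement for the Laplacian. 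You instead obtain the fine location from explicit approximate eigenfunctions $u_i=\overline g(\partial_{x^i},\nu)$ via the translation-variation identity $\partial_s\mathcal H=L u_i$ (valid because $\mathcal H$ is constant), a successive-replacement estimate toward the explicit Schwarzschild computation producing $6m\sigma^{-3}$, and then min--max for $(L-6m\sigma^{-3})^2$ together with a crude spectral gap; the Laplace spectrum is only needed at $o(\sigma^{-2})$ precision, and in fact your gap bound $\lambda_0>\sigma^{-2}$, $\lambda_4<-\sigma^{-2}$ already gives "at most three" in the window, so the min--max step closes the argument without locating $\lambda_{1,2,3}$ a priori. This is the Huisken--Yau-style route; it buys robustness (no sharp Laplace eigenvalue comparison) at the cost of the bookkeeping you describe, where two caveats should be made explicit when writing it up: the replacement of the leaf by the coordinate sphere requires the DeLellis--M\"uller closeness in its strong ($O(1)$ in $W^{2,2}$, hence $C^0$) form so that $\rad=\sigma(1+O(\sigma^{-\varepsilon}))$ on the leaf, and the terms involving $\nabla\mathring{\mathrm k}$ are controlled only in $\Lp^2$ by \eqref{estimates_for_k}, so the estimate $\Vert Lu_i-6m\sigma^{-3}u_i\Vert_{\Lp^2}\le C\sigma^{-3-\varepsilon}\Vert u_i\Vert_{\Lp^2}$ must be carried out in $\Lp^2$ rather than pointwise -- which is all your min--max argument needs.
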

\begin{proof}
Let $(\atime,\Hradius)\in I_{\c}$ and let us omit the corresponding indices in the following. Furthermore, let $\normal$ be the normal to $\M$, $\aoutric*$ the Ricci curvature of $\outM$ -- both with respect to the metric $\aoutg[\atime]*$ --, $N(\outsymbol x):=\nicefrac{(\outsymbol x-\centerz)}{\vert \outsymbol x-\centerz\vert}$ the radial direction with respect to the Euclidean coordinate center of $\M$ and $\radialdirection(\outsymbol x):=\nicefrac{\outsymbol x}{\vert \outsymbol x\vert}$ the radial direction with respect to the coordinate origin. We obtain
\begin{align*}
 \vert\aoutric(\nu,\nu) + \frac{2m}{\Hradius^3}\vert
	\le{}& \vert\aoutric(\radialdirection,\radialdirection) + \frac{2m}{\Hradius^3}\vert + \frac C{\Hradius^3}\,\vert N-\normal\vert + \frac C{\Hradius^3}\,\vert N-\radialdirection\vert \\
	\le{}& \frac C{\Hradius^{3+{\outve}}} + \frac C{\Hradius^3}\,\vert N-\normal\vert + \frac C{\Hradius^4}\,\vert\centerz\vert.
\end{align*}
By assumption on $(\atime,\Hradius)\in I_{\c}$, the estimate \eqref{estimates_on_the_normal} on the normal and the Sobolev inequality therefore imply that
\[ \Vert\aoutric(\nu,\nu) + \frac{2m}{\Hradius^3}\Vert_{\Lp^4(\M)} \le \frac C{\Hradius^{\frac52+{\outve}}}\labeleq{Ricci_inequality}. \]
By the estimates \eqref{estimates_on_conformal_parametrization} of the approximation $\M[\Hradius]\approx\mathcal S^2_{\Hradius}(\centerz)$ and the assumptions on $\outg$, we conclude that the first three eigenvalues $\lambda_1$, $\lambda_2$ and $\lambda_3$ of the Laplace fulfill 
\begin{align*}
 \vert\lambda_i-\frac2{\Hradius^2}\,(1-\frac{2m}\Hradius)\vert\le{}&\frac C{\Hradius^{3+{\outve}}}.
\end{align*}
The desired inequality now follows by the estimates \eqref{estimates_for_k} on $\zFundtrf$.
\end{proof}\pagebreak[2]

Metzger now uses the implicit function theorem to ensure the existence of the CMC spheres. Thereby, he obtains the regularity of spheres close to a regular one by proving that any sphere with constant mean curvature is regular. We obtain the estimate on the Euclidean coordinate center by taking into account the lapse functions and their regularity. Thus, we now decompose the derivative $\spartial[\sigma]@{\outsymbol\Phi}$ (and later $\spartial[\atime]@{\outsymbol\Phi}$) of $\outsymbol\Phi$ of Theorem \ref{existence} into lapse and shift:
\begin{defiundlemma}[Regularity of the lapse functions in space]\label{regularity_of_the_radial_lapse_function}
Let $\rnu:=\rnu<\Hradius>[\atime]$ be the lapse function in $\Hradius$-direc\-tion (lapse function in space) of $\aoutsymbol\Phi$ on $\M:=\M<\Hradius>[\atime]$, i.\,e.
\[ \left.\partial[\Hradius]@{\outsymbol\Phi}\right|_{\M} =: \rnu\,\normal + \rbeta, \]
with $\rbeta:=\rbeta<\Hradius>[\atime]\in\X(\M)$ and where $\normal:=\normal<\Hradius>[\atime]$ denotes the normal to $\M$ with respect to the metric ${\aoutg[\atime]*}$. There are constants $C=\Cof[m][{\outve}][\oc][\c]$ and $\Cof{\Hradius_0}[m][{\outve}][\oc][\c]$ such that for any $(\atime,\Hradius)\in I_{\c}$ with $\Hradius>\Hradius_0$
\[ \Vert \rnu - 1 \Vert_{\Hk^2(\M)} \le C\Hradius^{1-{\outve}}, \qquad\qquad
 \Vert \rnu - 1 \Vert_{\Wkp^{1,\infty}(\M)} \le \frac C{\Hradius^{\outve}}. \]
\end{defiundlemma}
\begin{proof}
By the definition of the lapse function $\rnu$ and the stability operator $\jacobiext*$, we conclude from inequality \eqref{Ricci_inequality} that
\begin{align*}\hspace{4em}&\hspace{-4em}
 \Vert\jacobiext*(\rnu-(1+\frac m\Hradius))\Vert_{\Lp^4(\M)} \\
  \le{}& \Vert\partial[\Hradius]({-}\frac2\Hradius+\frac{4m}{\Hradius^2}) - (1+\frac m\Hradius)\;(\frac2{\Hradius^2}-\frac{10m}{\Hradius^3})\Vert_{\Lp^4(\M)} + \frac C{\Hradius^{\frac52+{\outve}}} \\
	\le{}& \frac C{\Hradius^{\frac52+{\outve}}}.
\end{align*}
By Lemma \ref{the_lowest_eigenvalues_of_the_stability_operator}, we conclude the desired inequalities for $\rnu$ from the regularity of the Laplace operator.
\end{proof}\pagebreak[2]
Now, we have to prove an analogous lemma for the lapse function $\arnu$ on $\M=\M<\Hradius>[\atime]$ in $\atime$-direction (\emph{lapse function in spacetime}), i.\,e.
\begin{align*}
 \left.\partial[\atime]@{\aoutsymbol\Phi}\right|_{\M} =:{}& \arnu\,\normal + \atbeta,
\end{align*}
where $\atbeta:=\atbeta<\Hradius>[\atime]\in\X(\M)$ is the corresponding shift vector and $\normal:=\normal<\Hradius>[\atime]$ denotes the normal to $\M$ with respect to the metric $\aoutg[\atime]*$. Therefore, we have to estimate $\jacobiext<\Hradius>[\atime](\arnu<\Hradius>[\atime])$, where $\jacobiext<\Hradius>[\atime]$ is the stability operator of the surface $\M<\Hradius>[\atime]$ (with respect to the metric ${\aoutg[\atime]}$). As first step, we see that
\[ \partial[\atime]@{(\H<\Hradius>[\atime])} \equiv \partial[\atime](\frac{{-}2}\Hradius+\frac{4m}{\Hradius^2}) \equiv 0. \]
By definition, $\H<\Hradius>[\atime]$ is the mean curvature of the surface $\M<\Hradius>[\atime]$ with respect to the metric ${\aoutg[\atime]}$. Thus, by changing $\atime$ to $\atime+\delta\atime$, we do not only change the surface, but also the corresponding metric from ${\aoutg[\atime]}$ to ${\aoutg[\atime+\delta\atime]}$. To distinguish between these \lq two changes\rq, we define $\H<\Hradius>[\atime](\varrho)$ to be the mean curvature of the surface $\M<\Hradius>[\atime]$ with respect to the metric ${\aoutg[\varrho]}$. By the rules for the Lie derivative, we conclude
\begin{equation*}
 0 \equiv \partial[\atime]@{(\H<\Hradius>[\atime])}
	\equiv \partial[\atime]@{(\H<\Hradius>[\atime](\varrho))}<\varrho=\atime> + \partial[\varrho]@{(\H<\Hradius>[\atime](\varrho))}<\varrho=\atime>
	\equiv \jacobiext<\Hradius>[\atime](\arnu<\Hradius>[\atime]) + \partial[\varrho]@{(\H<\Hradius>[\atime](\varrho))}<\varrho=\atime>. \labeleq{stability_operator_on_arnu}
\end{equation*}
Therefore, we have to estimate the last derivative in order to prove the regularity of the lapse function in spacetime. To do so, we want to use the following proposition to characterize this derivative. The proposition is formulated in a more abstract way, because we also need it in Section \ref{The_evolution}.\pagebreak[2]
\begin{proposition}[Time derivative of the mean curvature]\label{change_of_meancurvature}
Let $\Phi:I\times\outM\to\uniM$ be an orthogonal foliation of an arbitrary Lorentzian manifold by spacelike hypersurfaces $\outM[t]:=\Phi(t,\outM)$, i.\,e.
\begin{align*} \partial[t]@\Phi ={}& \ralpha[t]\;\outnu[t] \end{align*}
for the lapse function $\ralpha[t]$ on $\outM[t]$ and the future-pointing unit normal $\outnu[t]$ to $\outM[t]$. Let further $\M\subseteq\outM$ be a closed hypersurface in $\outM$. If we denote by $\H[t]$ the mean curvature of the hypersurface $\M[t]:=\Phi(t,\M)$ in $\outM[t]$, then -- omitting the index $t$ --
\[ \partial@{\hspace{.05em}\H[t]}[t]
  = \ralpha\,(\momenden(\normal) - \div\outzFund_{\nu} + \trtr{\zFund}{\outzFund}) + (D_{\normal}\ralpha)\,\tr\outzFund - 2\outzFund_{\nu}(\levi*\ralpha), \]
where $\nu$ denotes the outer normal of $\M=\M[t]$ in $\outM=\outM[t]$, $\zFund$ the exterior curvature of $\M$ in $\outM$, $\outzFund$ the exterior curvature of $\outM$ in $\uniM$, $\outzFund_{\nu}(\cdot):={\outzFund}(\normal,\cdot)$ is a one-form on $\M$, $\outsymbol J(\normal):=\uniric(\outnu,\normal)$ and the $(0,2)$-tensor $\trzd\zFund\outzFund$ on $\M$ is defined by $(\trzd\zFund\outzFund)_{\ii\!\ij}:=\zFund_{\ii\!\ik}\g^{\ik\!\il}\outzFund_{\il\hspace{-.05em}\ij}$.
\end{proposition}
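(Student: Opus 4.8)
The plan is to differentiate $\H[t]=\g^{IJ}\zFund_{IJ}$ directly along the flow $\Phi$. Write $\widehat\nabla$ for the Levi-Civita connection of $\uniM$ and $\nabla$ for that of $\M[t]$, and express the second fundamental form of $\M[t]$ in $\outM[t]$ as $\zFund_{IJ}=-\unig(\widehat\nabla_{e_I}\normal,e_J)$, where $e_I$ are local coordinate fields on $\M$ pushed forward by $\Phi$ (this sign matches $\H\equiv\nicefrac{{-}2}\Hradius+\nicefrac{4m}{\Hradius^2}$ on round spheres, and is compatible with $\outzFund_{ij}=-\frac1{2\ralpha}\partial[t]@{\outg[t]_{ij}}$). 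Because the foliation is orthogonal, $\partial[t]@\Phi=\ralpha\,\outnu$ has no shift part, so along $\Phi$ the $e_I$ commute with the deformation vector field; hence $\widehat\nabla_{\ralpha\outnu}e_I=\widehat\nabla_{e_I}(\ralpha\,\outnu)=(e_I\ralpha)\,\outnu+\ralpha\,\widehat\nabla_{e_I}\outnu$, and every spacetime covariant derivative in the $\outnu$-direction that occurs below is legitimate since $\outnu$, being proportional to $\partial[t]@\Phi$, is tangent to the world tube $\bigcup_t\M[t]$.

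First I would record three elementary variation formulas: (i) $\partial[t]@{\g_{IJ}}=-2\ralpha\,\outzFund_{IJ}$ on $T\M$ (with $\outzFund_{IJ}$ the restriction of the second fundamental form of $\outM[t]$ in $\uniM$), whence $\partial[t]@{\g^{IJ}}=2\ralpha\,\g^{IK}\g^{JL}\outzFund_{KL}$; (ii) the Weingarten relation $\widehat\nabla_{e_I}\outnu=-({\outzFund}(e_I,\cdot))^{\sharp}$, a vector tangent to $\outM[t]$, together with the fact that $\widehat\nabla_\outnu\outnu$ equals the $\outM[t]$-gradient of $\ln\ralpha$; and (iii) the identity $\widehat\nabla_{\ralpha\outnu}\normal=\ralpha\,(\outzFund_{\nu})^{\sharp}+(D_{\normal}\ralpha)\,\outnu$ (with $(\outzFund_{\nu})^{\sharp}$ taken on $\M$), obtained by differentiating $\unig(\normal,\normal)=1$, $\unig(\normal,\outnu)=0$ and $\unig(\normal,e_I)=0$ along $\Phi$ and inserting (ii). With these in hand I differentiate $\zFund_{IJ}=-\unig(\widehat\nabla_{e_I}\normal,e_J)$. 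The piece $-\unig(\widehat\nabla_{e_I}\normal,\widehat\nabla_{\ralpha\outnu}e_J)$ is evaluated directly from (ii) and, after the $\g^{IJ}$-trace, equals $-\outzFund_{\nu}(\levi*\ralpha)-\ralpha\,\trtr{\zFund}{\outzFund}$. In the piece $-\unig(\widehat\nabla_{\ralpha\outnu}\widehat\nabla_{e_I}\normal,e_J)$ I commute the covariant derivatives; the bracket term vanishes because $[\ralpha\outnu,e_I]=0$, leaving $-\unig(\widehat\nabla_{e_I}\widehat\nabla_{\ralpha\outnu}\normal,e_J)$ together with a curvature term. Into the former I insert (iii): the $\M$-tangential part $\ralpha(\outzFund_{\nu})^{\sharp}$ turns, via the Gauss formula (so that $\widehat\nabla$ may be traded for $\nabla$ once paired with $e_J$), into a surface divergence $-\div(\ralpha\,\outzFund_{\nu})=-\ralpha\,\div\outzFund_{\nu}-\outzFund_{\nu}(\levi*\ralpha)$, while the normal part $(D_{\normal}\ralpha)\,\outnu$ contributes $(D_{\normal}\ralpha)\,\g^{IJ}\outzFund_{IJ}=(D_{\normal}\ralpha)\,\tr\outzFund$ by (ii). The curvature term, after the $\g^{IJ}$-trace in the orthonormal frame adapted to $\{\outnu,\normal,T\M\}$, is read off as follows: the curvature components with $\outnu$ or $\normal$ repeated vanish by symmetry, and what remains is precisely $\ralpha\,\uniric(\outnu,\normal)=\ralpha\,\momenden(\normal)$, using $\momenden=\uniric(\outnu,\cdot)$.

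Finally I collect everything: the contribution $2\ralpha\,\trtr{\zFund}{\outzFund}$ from $\partial[t]@{\g^{IJ}}\,\zFund_{IJ}$ cancels the $-\ralpha\,\trtr{\zFund}{\outzFund}$ found above to leave $\ralpha\,\trtr{\zFund}{\outzFund}$, the two copies of $-\outzFund_{\nu}(\levi*\ralpha)$ add up to $-2\outzFund_{\nu}(\levi*\ralpha)$, and together with $\ralpha\,\momenden(\normal)$, $-\ralpha\,\div\outzFund_{\nu}$ and $(D_{\normal}\ralpha)\,\tr\outzFund$ this is exactly the claimed identity. I expect the main obstacle to be bookkeeping rather than ideas: one must carry a single consistent set of sign conventions for $\zFund$, $\outzFund$ and the Riemann and Ricci tensors throughout (the sign of $\zFund$ being pinned down by $\H\equiv\nicefrac{{-}2}\Hradius+\cdots$ and that of $\outzFund$ by $\outzFund_{ij}=-\frac1{2\ralpha}\partial[t]@{\outg[t]_{ij}}$), keep straight which part of $\widehat\nabla_{\ralpha\outnu}\normal$ is responsible for which output term -- its $\outnu$-component yields $(D_{\normal}\ralpha)\,\tr\outzFund$, its $T\M$-component yields the divergence $-\ralpha\,\div\outzFund_{\nu}$ and one copy of $\outzFund_{\nu}(\levi*\ralpha)$ -- and verify that the traced curvature reassembles into exactly $\uniric(\outnu,\normal)$ and not some other contraction. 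Orthogonality of the foliation is used in an essential way: a nonzero shift would introduce extra Lie-derivative terms, which is why the proposition is formulated for orthogonal foliations only.
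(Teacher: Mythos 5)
Your computation is correct and reproduces exactly the stated identity, including the right reading of $\tr\outzFund$ as the two-dimensional trace $\g^{\ii\ij}\outzFund_{\ii\ij}$ over $T\M$, but it follows a genuinely different route than the paper's proof. You differentiate the Weingarten relation for $\normal$ covariantly along $\ralpha\,\outnu$ and let the ambient curvature enter through the commutator $\widehat\nabla_{\ralpha\outnu}\widehat\nabla_{e_{\ii}}\normal-\widehat\nabla_{e_{\ii}}\widehat\nabla_{\ralpha\outnu}\normal=\widehat R(\ralpha\outnu,e_{\ii})\normal$, whose trace over $T\M$ is indeed $\ralpha\,\uniric(\outnu,\normal)=\ralpha\,\momenden(\normal)$ since the frame terms with $\outnu$ or $\normal$ repeated drop out by the curvature symmetries; your variation formulas (i)--(iii) are correct, and the bookkeeping checks out: $2\ralpha\,\trtr\zFund\outzFund$ from the inverse metric against $-\ralpha\,\trtr\zFund\outzFund-\outzFund_{\nu}(\levi*\ralpha)$ from the derivative hitting $e_{\ij}$, plus $-\ralpha\,\div\outzFund_{\nu}-\outzFund_{\nu}(\levi*\ralpha)$ and $(D_{\normal}\ralpha)\,\tr\outzFund$ from the tangential and normal components of $\widehat\nabla_{\ralpha\outnu}\normal$, assemble to the claimed formula. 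The paper proceeds differently: it fixes an adapted chart (normal coordinates on $\M$ at a point, $\spartial[\outsymbol x]_1$ orthogonal to $\M$, $\hat e_0$ orthogonal to the slices), computes the Lie derivatives $\lieD{\hat e_0}{\zFund}$ and $\lieD{\hat e_0}{(\rnu\,\H)}$, and only at the end introduces $\momenden(\normal)$ through the contracted Codazzi equation of $\outM$ in $\uniM$, so the Ricci term is produced by the momentum-constraint identity rather than by a curvature commutator. Your version is coordinate-free, makes the use of orthogonality transparent (namely $[\ralpha\,\outnu,e_{\ii}]=0$, so no shift-induced Lie-derivative terms), and avoids the chart construction; the paper's version avoids second covariant derivatives and curvature identities altogether and stays in the variables $\outmc$ and $\outdiv\outzFund$ that recur later through the constraint equations. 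Both approaches lead to the same cancellation pattern and the same final identity.
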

\begin{proof}
We fix $t_0$, omit all unnecessary indices, and identify $\M$ with $\Phi(t,\M)$. We note that the desired equality is a local identity, in particular, it is sufficient to prove it in a chart. Thus, we construct an adapted chart $\unisymbol x$ of $\uniM$ in a neighborhood of an arbitrary point $p\in\M$. Choose in a neighborhood $U\subseteq\M$ of $p$ a chart $x$, such that the metric $\g$ induced on $\M$ by $\Phi$ and $\outg[t]$ is normal in this point $p\in U$, i.\,e.
\begin{align*} \g_{\!\ii\!\ij}(p) ={}& \eukg_{\!\ii\!\ij}, & \partial_\ii@{\g_{\!\ii\!\ik}}(p) ={}& 0. \end{align*}
Now, choose a chart $\outsymbol x$ on a neighborhood of $p$ in $\outM$. This can be done in such a way, that $\outsymbol x|_{\M}=(0,x)$ and such that $\spartial[\outsymbol x]_1$ is everywhere orthogonal to $\spartial[\outsymbol x]_\ii$ for $\ii>1$. Finally, choose a chart $\unisymbol x$ on a neighborhood of $p$ in $\uniM$, such that $\unisymbol x_0|_{\outM[t]}\equiv t$ for any $t$ and $\unisymbol x|_{\outM[t_0]}\equiv(t_0,\outsymbol x)$ and that $\hat e_0:=\spartial[\hat x]_0$ is everywhere orthogonal to $\spartial[\hat x]_\oi$ for $\oi>0$. Let $f$ be the orthogonal projection of $e_1:=\spartial[\outsymbol x]_1$ on the unit normal $\nu[t]$ of $\M[t]$ in $\outM[t]$, i.\,e.\ $f:=\unisymbol e_1-\unig^{\ii\!\ij}\unig_{1\ii}\unisymbol e_{\ij}$. In particular, we get $\tilde\rnu=\rnu$ and $\spartial[t]@{\tilde\rnu}=\spartial[t]@\rnu$ on $\M$, where $\tilde\rnu:=\sqrt{\unig(f,f)}$ and $\rnu:=\sqrt{\unig(e_1,e_1)}$.\smallskip\pagebreak[1]

Denoting the Lie derivative of a tensor $T$ in direction of a vector field $X$ by $\lieD XT$, we see
\begin{align*}
 \tr(\lieD{\hat e_0}\zFund)
 ={}& \frac1{2\rnu^2}\partial_0@\rnu\,\tr(\lieD{\hat e_1}\unig*) - \frac1{2\rnu}\tr(\lieD{\hat e_1}\lieD{\hat e_0}\unig*) - \frac1{2\rnu}\tr(\lieD{[\hat e_0,f]}\unig*) \\
 ={}& \frac1\rnu(\ralpha\,\tr(\lieD{\outsymbol e_1}\outzFund) - \partial_0@\rnu\,\H + \partial_1@\ralpha\,\troutzFund
			- \g^{rs} (\vphantom{\frac\cdot\cdot} D_r(\ralpha\rnu\outzFund_{\nu s})+ D_s(\ralpha\rnu\outzFund_{\nu r}))) \\
 ={}& \ralpha\outzFund_{\nu\nu}\,\H + \frac\ralpha\rnu(\tr(\lieD{\outsymbol e_1}\outzFund)-2\outzFundnu(\levi*\rnu))
			+ \partial_1@\ralpha\frac\troutzFund\rnu - 2\outzFundnu(\levi*\ralpha) -2\ralpha\,\div\outzFundnu\!.
\end{align*}
This implies
\begin{align*}
 \lieD{\hat e_0}{\H}
	={}& \tr(\lieD{\hat e_0}\zFund) - \g^{pr}\,\partial_0@{\g_{rs}}\,\g^{sq}\,\zFund_{pq}
	= \tr(\lieD{\hat e_0}\zFund) + 2\ralpha\,\g^{pr}\,\outzFund_{rs}\,\g^{sq}\,\zFund_{pq} \\
	={}& \tr(\lieD{\hat e_0}\zFund) - \frac\ralpha\rnu\,\g^{pr}\,\outzFund_{rs}\,\g^{sq}\,\partial_0@{\g_{pq}} \\
	={}& \ralpha\,\outzFund_{\nu\nu}\H + \frac1\rnu(\ralpha\,\lieD{e_1}{(\troutzFund)} - 2\ralpha\,\outzFundnu(\levi*\rnu) + \partial_1@\ralpha\,\troutzFund) \\
			& - 2\outzFundnu(\levi*\ralpha)-2\ralpha\,\div\outzFundnu\!.
\end{align*}
As a first step, we thus conclude
\begin{equation*}
 \lieD{\hat e_0}{(\rnu\,\H)}= \lieD{e_1}{(\ralpha\,\troutzFund)} - 2\rnu\,\outzFundnu(\levi*\ralpha)-2\ralpha\,\outzFundnu(\levi*\rnu)-2\ralpha\,\rnu\,\div\outzFundnu\!. \labeleq{partial_0_H}
\end{equation*}

By definition of the exterior curvature, we know
\[ \outzFund(\normal,\normal) = \frac{{-}1}{\rnu\ralpha}\,\partial_0@\rnu, \]
which by \eqref{partial_0_H} leads to
\begin{equation*}
 \rnu\,\partial_0@\H = \partial_1@{(\ralpha\,\troutzFund)} - 2\rnu\,\outzFundnu(\levi*\ralpha)-2\ralpha\,\outzFundnu(\levi*\rnu)-2\ralpha\,\rnu\,\div\outzFundnu - \H\,\partial_0@\rnu. \labeleq{partial_0_H__2}
\end{equation*}
By the Codazzi equation, we furthermore know
\begin{align*}
 \rnu\,\momenden(\normal) ={}& \partial_1@{\outmc} - (\outdiv\outzFund)(e_1)
  = \partial_1@{(\troutzFund)} - 2\outzFundnu(\levi*\rnu) + \rnu(\outzFund_{\nu\nu}\!\H- \div\outzFund_{\nu} - \trtr\zFund\outzFund).
\end{align*}
Combining this with \eqref{partial_0_H__2}, we get the desired identity.
\end{proof}\pagebreak[3]
To use Proposition \ref{change_of_meancurvature} in the current setting, we have to construct a suitable Lorentzian manifold. To do so, we recall that Metzger considers artificial metrics ${\aoutg[\tau]}:=\schwarzg*+\tau(\outg*-\schwarzg*)$ ($\atime\in\interval*0*1$). We extend this idea and construct an \emph{artificial spacetime}.
\begin{construction}[Artificial Lorentzian manifolds]\label{Artificial_Lorentzian_manifolds}
Let $(\outM,\outg*,\outsymbol x)$ be an asymptotically Schwarzschildean three-dimensional Riemannian manifold. The \emph{artificial Lorentzian manifold} $(\uniM,\unig*)$ of $(\outM,\outg*)$ is defined by
\[
 \uniM[a] := \interval*0*1\times\outM, \qquad\qquad
 \unig[a]* := {-}\d\DoAuniTensor\tau\otimes\d\DoAuniTensor\tau+\schwarzg+\DoAuniTensor\tau\,(\outg*-\schwarzg),
\]
where the Schwarzschild metric is defined with respect to the given chart $\outsymbol x$ and $\atime:\uniM[a]\to\interval*0*1:(\atime,\outsymbol p)\mapsto\atime$ denotes the \emph{artificial time}. In particular, the three-dimensional lapse function $\ralpha$ fulfills $\ralpha\equiv1$.
\end{construction}\pagebreak[3]
Now we can use Proposition \ref{change_of_meancurvature} in order to get estimates of $\jacobiext<\Hradius>[\atime]\,(\arnu<\Hradius>[\atime])$.
\begin{defiundlemma}[\texorpdfstring{$\atime$-}{\lq artificial time\rq\ }derivative of the mean curvature]\label{change_of_meancurvature_sc}
Let $\arnu:=\rnu<\Hradius>[\atime]$ be the lapse function in $\atime$-direction (lapse function in spacetime) of $\aoutsymbol\Phi$ on $\M:=\M<\Hradius>[\atime]$, i.\,e.
\[ \left.\partial[\atime]@{\aoutsymbol\Phi}\right|_{\M} =: \arnu\,\normal + \atbeta, \]
where $\atbeta:=\atbeta<\Hradius>[\atime]\in\X(\M)$ is the corresponding shift vector and $\normal:=\normal<\Hradius>[\atime]$ denotes the normal to $\M$ with respect to the metric $\aoutg[\atime]*$. If $\H<\Hradius>[\atime](\varrho)$ is the mean curvature of $\M<\Hradius>[\atime]$ with respect to the metric $\aoutg[\varrho]*$, we obtain
\[ \vert\jacobiext<\Hradius>[\atime_{\!0}]\,\arnu<\Hradius>[\atime_{\!0}]\vert = \vert\partial[\varrho]<\varrho=\atime_0>@{\H<\Hradius>[\atime_{\!0}](\varrho)}\vert \le \frac C{\Hradius^{2+{\outve}}} \qquad\forall\,(\atime_0,\Hradius)\in I_{\c},\;\Hradius>\Hradius_0 \]
where $C=\Cof[m][{\outve}][\oc][\c]$ and $\Hradius_0=\Cof{\Hradius_0}[m][{\outve}][\oc][\c]$ are constants not depending on $\atime$.
\end{defiundlemma}
\begin{proof}
Using Proposition \ref{change_of_meancurvature} for the artificial Lorentzian manifold $(\uniM,\unig)$ constructed above and $\M:=\M<\Hradius>[\atime]$, we conclude -- omitting the indices $\atime$ and $\Hradius$ -- 
\begin{align*}
 \partial[\varrho]<\varrho=\atime>@{\H(\varrho)} ={}& \uniric[a](\normal,\partial[\unisymbol\atime]) - \div(\DoAaoutTensor K_{\normal}) + \trtr\zFund\aoutzFund,
\end{align*}
where we used the notation of Proposition \ref{change_of_meancurvature}. By the Codazzi equation, we obtain
\[ \partial[\varrho]<\varrho=\atime>@{\H(\varrho)} = D_{\normal}(\aouttr[\atime]\,\aoutzFund) - (\aoutdiv[\atime]\,\aoutzFund)(\normal) - \div(\aoutzFund_{\normal}) + \trtr\zFund\aoutzFund, \]
where $\aouttr[\atime]\,\aoutzFund$ is the trace of $\aoutzFund$ and the one-form $\aoutdiv[\atime\,]\aoutzFund$ is the divergence of $\aoutzFund$ -- both with respect to the metric $\aoutg[\atime]*$. By recognizing that ${-}2\,\aoutzFund_{ij}=\outg_{ij}-\schwarzg_{ij}$ and using the assumptions on $\outg*$, we see that $\vert\aoutzFund\vert\le\nicefrac C{\Hradius^{1+{\outve}}}$ and $\vert\outlevi*\aoutzFund\vert\le\nicefrac C{\Hradius^{2+{\outve}}}$. Therefore, we conclude the desired inequality with \eqref{stability_operator_on_arnu}.
\end{proof}\pagebreak[2]
By Lemma \ref{the_lowest_eigenvalues_of_the_stability_operator}, we conclude the following inequalities for $\arnu$ from the regularity of the Laplace operator:
\begin{lemma}[Regularity of the lapse functions in spacetime]\label{regularity_of_the_spacetime_lapse_function}
There are constants $C=\Cof[m][{\outve}][\oc][\c]$ and $\Cof{\Hradius_0}[m][{\outve}][\oc][\c]$ such that for any $(\atime,\Hradius)\in I_{\c}$ with $\Hradius>\Hradius_0$
\[ \Vert \arnu \Vert_{\Hk^2(\M)} \le C\Hradius^{2-{\outve}}, \qquad\qquad \Vert \arnu \Vert_{\Wkp^{1,\infty}(\M)} \le C\Hradius^{1-{\outve}}. \]
\end{lemma}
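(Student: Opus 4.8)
The plan is to read the identity~\eqref{stability_operator_on_arnu} together with Definition and Lemma~\ref{change_of_meancurvature_sc} as the elliptic equation
\[ \jacobiext\,\arnu = g, \qquad g := -\partial[\varrho]<\varrho=\atime>@{\H<\Hradius>[\atime](\varrho)}, \qquad \vert g\vert\le\frac C{\Hradius^{2+{\outve}}}, \]
on the surface $\M:=\M<\Hradius>[\atime]$ (from now on I omit the $\atime$- and $\Hradius$-indices and use the scale-invariant norms of Definition~\ref{sobolev_norms}), then to invert the self-adjoint stability operator $\jacobiext$ using the spectral gap of Lemma~\ref{the_lowest_eigenvalues_of_the_stability_operator}, and finally to promote the resulting $\Lp^2$-estimate to the claimed $\Hk^2$- and $\Wkp^{1,\infty}$-estimates by scale-invariant elliptic regularity and Sobolev embedding on $\M$ -- exactly along the lines of the proof of Definition and Lemma~\ref{regularity_of_the_radial_lapse_function}. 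All constants stay uniform in $(\atime,\Hradius)\in I_\c$ because Theorem~\ref{existence} supplies uniform control of the geometry of $\M$ (area $\volume\M\approx4\pi\Hradius^2$, $\Vert\zFundtrf\Vert_{\Lp^\infty(\M)}\le C\Hradius^{-2}$, and a Sobolev inequality on $\M$ with controlled constant).

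The first step converts the pointwise bound into $\Vert g\Vert_{\Lp^2(\M)}\le C\Hradius^{-1-{\outve}}$ and $\Vert g\Vert_{\Lp^4(\M)}\le C\Hradius^{-3/2-{\outve}}$ via $\volume\M\le C\Hradius^2$. By Lemma~\ref{the_lowest_eigenvalues_of_the_stability_operator}, every eigenvalue $\lambda$ of $\jacobiext$ satisfies $\vert\lambda\vert\ge 3m\Hradius^{-3}$ for $\Hradius$ large: either $\vert\lambda-6m\Hradius^{-3}\vert\le C\Hradius^{-3-{\outve}}$ and hence $\lambda\ge 3m\Hradius^{-3}$, or $\vert\lambda\vert>\Hradius^{-2}>3m\Hradius^{-3}$. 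Since $\jacobiext$ is self-adjoint on $\Lp^2(\M)$, this yields $\Vert\jacobiext^{-1}\Vert_{\Lp^2(\M)\to\Lp^2(\M)}\le C\Hradius^3$ and therefore
\[ \Vert\arnu\Vert_{\Lp^2(\M)}\le C\Hradius^3\,\Vert g\Vert_{\Lp^2(\M)}\le C\Hradius^{2-{\outve}}. \]
(The same inversion, combined with a standard elliptic bootstrap, also upgrades the a priori $\Ck^0$-regularity of $\arnu$ inherited from Theorem~\ref{existence}, so that the following elliptic estimates are legitimate.)

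Next I rewrite the equation as $\laplace\,\arnu = g-(\vert\zFund\vert^2+\aoutric(\nu,\nu))\,\arnu$ and note that the potential is bounded in $\Lp^\infty(\M)$ by $C\Hradius^{-2}$: the term $\vert\zFund\vert^2=\vert\zFundtrf\vert^2+\tfrac12\H^2$ is $O(\Hradius^{-2})$ since $\vert\H\vert\le C\Hradius^{-1}$ and $\Vert\zFundtrf\Vert_{\Lp^\infty(\M)}\le C\Hradius^{-2}$ by~\eqref{estimates_for_k}, while the Ricci term is even $O(\Hradius^{-3})$ because $(\atime,\Hradius)\in I_\c$ forces every point of $\M$ to have Euclidean coordinate distance $\ge\tfrac12\Hradius$ from the origin for $\Hradius$ large (compare~\eqref{Ricci_inequality}), where $\aoutric$ decays like $\Hradius^{-3}$. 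The scale-invariant $\Hk^2$-elliptic estimate on $\M$ then gives
\[ \Vert\arnu\Vert_{\Hk^2(\M)}\le C\bigl(\Hradius^2\Vert\laplace\,\arnu\Vert_{\Lp^2(\M)}+\Vert\arnu\Vert_{\Lp^2(\M)}\bigr)\le C\bigl(\Hradius^2\Vert g\Vert_{\Lp^2(\M)}+\Vert\arnu\Vert_{\Lp^2(\M)}\bigr)\le C\Hradius^{2-{\outve}}, \]
which is the first claimed inequality. Since $\Hk^2$ on a surface just misses $\Ck^1$, the $\Wkp^{1,\infty}$-bound requires one more bootstrap through $\Lp^4$: the scale-invariant Sobolev embedding gives $\Vert\arnu\Vert_{\Lp^\infty(\M)}\le C\Hradius^{-1}\Vert\arnu\Vert_{\Hk^2(\M)}\le C\Hradius^{1-{\outve}}$, hence $\Vert\arnu\Vert_{\Lp^4(\M)}\le C\Hradius^{3/2-{\outve}}$; feeding this and $\Vert g\Vert_{\Lp^4(\M)}\le C\Hradius^{-3/2-{\outve}}$ into the scale-invariant $\Wkp^{2,4}$-estimate gives $\Vert\arnu\Vert_{\Wkp^{2,4}(\M)}\le C\Hradius^{3/2-{\outve}}$, and the Morrey embedding $\Wkp^{2,4}(\M)\hookrightarrow\Wkp^{1,\infty}(\M)$, which scale-invariantly reads $\Vert\cdot\Vert_{\Wkp^{1,\infty}(\M)}\le C\Hradius^{-1/2}\Vert\cdot\Vert_{\Wkp^{2,4}(\M)}$, finishes the proof with $\Vert\arnu\Vert_{\Wkp^{1,\infty}(\M)}\le C\Hradius^{1-{\outve}}$.

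The one genuinely delicate point is the inversion in the first step: it must lose \emph{exactly} three powers of $\Hradius$, which is precisely why Lemma~\ref{the_lowest_eigenvalues_of_the_stability_operator} is phrased as a dichotomy -- it excludes any eigenvalue of $\jacobiext$ lying in the gap between $\sim\Hradius^{-3}$ and $\sim\Hradius^{-2}$, so that $\Vert\jacobiext^{-1}\Vert\le C\Hradius^3$ and not something larger, which would ruin every exponent downstream. Everything after that is routine bookkeeping with the scale-invariant norms, the uniformity of the elliptic and Sobolev constants being inherited from Theorem~\ref{existence}.
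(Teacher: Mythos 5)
Your argument is correct and follows exactly the route the paper intends: the paper's entire proof is the one-line remark that Lemma~\ref{the_lowest_eigenvalues_of_the_stability_operator} together with the bound of Definition and Lemma~\ref{change_of_meancurvature_sc} and ``the regularity of the Laplace operator'' yields the estimates, and you have simply filled in the spectral inversion (losing exactly $\Hradius^3$), the scale-invariant elliptic estimates, and the Sobolev/Morrey embeddings, just as in the proof of Definition and Lemma~\ref{regularity_of_the_radial_lapse_function}. The only cosmetic imprecision is the step $\Hradius^2\Vert\laplace\arnu\Vert_{\Lp^2(\M)}\le C\Hradius^2\Vert g\Vert_{\Lp^2(\M)}$, where the potential term $\Hradius^2\Vert(\vert\zFund\vert^2+\aoutric(\nu,\nu))\arnu\Vert_{\Lp^2(\M)}\le C\Vert\arnu\Vert_{\Lp^2(\M)}$ should be carried along explicitly, but it is absorbed by the $\Vert\arnu\Vert_{\Lp^2(\M)}$ term you already keep, so the estimate stands.
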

By definition, every evolution $\psi$ of a sphere is characterized by the lapse function ${\arnu[\psi]}$ and the shift ${\atbeta}$. In particular, this is true for the \lq movement\rq\ of the spheres, i.\,e.\ the change in time $\atime$ of the coordinate origin. By computing the $\Lp^2$ norm of $\nu_i$, we recognize that any function $f\in\Lp^2$ can be written as 
\[ f=f_{\nu} + f_{\text R}:=3\sum_i\normal_i\fint_{\M} \normal_i\, f \d\mug + f_{\text R}, \]%
where $f_{\text R}$ is $\Lp^2$-orthogonal to $\nu_i$ up to an error. This error vanishes asymptotically and is explained by the fact that the $\nu_i$ are only asymptotically $\Lp^2$-orthogonal to each other. It is intuitively clear that this $\arnu[\psi]_{\nu}$ part of the lapse function characterizes the \lq movement\rq\ of the spheres.
\begin{proposition}[Movement of the spheres by the lapse function]\label{the_moving_of_the_spheres_by_the_lapse_function}
Let $\psi:\interval-{\vartheta_0}{\vartheta_0}\times\M<\Hradius>[\tau]\to\aoutM[\tau]$ be a deformation of $\M:=\M<\Hradius>[\atime]$, i.\,e.\ $\psi(0,\cdot)=\text{id}|_{\M}$. There are constants $C=\Cof[m][{\outve}][\oc][\c]$ and $\Hradius_0=\Cof{\Hradius_0}[m][{\outve}][\oc][\c]$ neither depending on $\psi$ nor on $\atime$, such that for $\Hradius>\Hradius_0$ the Euclidean coordinate centers $\centerz<\vartheta>$ of the hypersurface $\M<\vartheta>:=\psi(\vartheta,\M)$ fulfill
\[ \vert \partial[\vartheta]@{\centerz<\vartheta>_i} - 3\fint_{\M} \normal_i\lapse[\psi] \d\mug \vert \le \frac C{\Hradius^2}\Vert\lapse[\psi]\Vert_{\Lp^2(\M)}, \]%
where $\lapse[\psi]:=\aoutg[\atime](\spartial[\vartheta]@\psi,\normal)$ is the lapse function of $\psi$ and $\normal$ denotes the outer unit normal of $\M<\Hradius>[\atime]$ with respect to the metric $\outg[\atime]$.
\end{proposition}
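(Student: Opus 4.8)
The plan is to differentiate the Euclidean coordinate center $\centerz<\vartheta>_i=\fint_{\M<\vartheta>}\outsymbol x_i\,\d\Haus^2$ of the moving surface \emph{directly}, treating the coordinate functions $\outsymbol x_i$ and the area measure $\Haus^2$ as Euclidean objects throughout and comparing the resulting Euclidean quantities with their $\aoutg[\atime]$-counterparts only at the very end; it suffices to work at $\vartheta=0$, where $\M<0>=\M=\M<\Hradius>[\atime]$. Writing $X:=\partial[\vartheta]@\psi$ for the variation field, the first variation formulas for Euclidean surface integrals, the quotient rule, and one integration by parts -- which removes the $\eukg$-tangential part of $X$, since $\M<\vartheta>$ is closed -- give the \emph{exact} identity
\[
 \partial[\vartheta]@{\centerz<\vartheta>_i}
  =\fint_{\M}\eukg(X,\nu^{\euclideane})\,\bigl((\nu^{\euclideane})_i-\H^{\euclideane}\,(\outsymbol x_i-\centerz_i)\bigr)\,\d\Haus^2 ,
\]
where $\nu^{\euclideane}$ and $\H^{\euclideane}$ are the outer unit normal and mean curvature of $\M$ with respect to $\eukg$ (in this paper's sign convention, so $\H^{\euclideane}\approx\nicefrac{-2}\Hradius$) and we have used $\nu^{\euclideane}(\outsymbol x_i)=(\nu^{\euclideane})_i$.

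On the \emph{Euclidean} round sphere $\mathcal S^2_\Hradius(\centerz)$ one has $(\nu^{\euclideane})_i=N_i$, $\outsymbol x_i-\centerz_i=\Hradius N_i$ and $\H^{\euclideane}=\nicefrac{-2}\Hradius$ with $N:=\nicefrac{(\outsymbol x-\centerz)}{\vert\outsymbol x-\centerz\vert}$, so the bracket equals $N_i+2N_i=3N_i$ -- the factor $3$ in the assertion is precisely this $1+2$. Accordingly I would write $(\nu^{\euclideane})_i-\H^{\euclideane}(\outsymbol x_i-\centerz_i)=3(\nu^{\euclideane})_i+E_i$ with defect $E_i:=-2(\nu^{\euclideane})_i-\H^{\euclideane}(\outsymbol x_i-\centerz_i)$, which vanishes on a round sphere, and establish the \emph{dimensionless} estimate $\Vert E_i\Vert_{\Lp^2(\M)}\le C$. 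This quantifies how round the CMC leaf is \emph{in the flat metric} and should follow from the estimates of Theorem~\ref{existence} -- chiefly $\Vert\zFundtrf\Vert_{\Lp^\infty(\M)}\le\nicefrac C{\Hradius^2}$ together with the DeLellis--M\"uller rigidity estimates \eqref{estimates_on_conformal_parametrization}--\eqref{estimates_on_the_normal} -- once one rewrites the (constant) $\aoutg[\atime]$-mean curvature $\nicefrac{-2}\Hradius+\nicefrac{4m}{\Hradius^2}$ as the $\eukg$-mean curvature by means of the conformal relation $\schwarzg=(1+\frac m{2\rad})^4\,\eukg$ and the decay $\vert\outg-\schwarzg\vert\le\nicefrac{\oc}{\rad^{1+{\outve}}}$, using $\rad=\vert\outsymbol x\vert\sim\Hradius$ on $\M$ (which holds since $(\atime,\Hradius)\in I_{\c}$).

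It then remains to pass from the Euclidean objects to the $\aoutg[\atime]$-objects. Since $\aoutg[\atime]$ is asymptotically Schwarzschildean, $\aoutg[\atime]-\eukg=\mathcal O(\Hradius^{-1})$ on $\M$, so that $\nu^{\euclideane}-\normal=\mathcal O(\Hradius^{-1})$, $\d\mug=(1+\mathcal O(\Hradius^{-1}))\,\d\Haus^2$ and $\eukg(X,\nu^{\euclideane})=\lapse[\psi]+\mathcal O(\Hradius^{-1})\vert\lapse[\psi]\vert$ -- here the $\eukg$-tangential part of $X$ drops out of $\eukg(X,\nu^{\euclideane})$ because it is $\eukg$-orthogonal to $\nu^{\euclideane}$. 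Substituting these expansions expresses $3\fint_\M(\nu^{\euclideane})_i\,\eukg(X,\nu^{\euclideane})\,\d\Haus^2$ as $3\fint_\M\normal_i\,\lapse[\psi]\,\d\mug$ plus further errors of the same type as the $E_i$-contribution.

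Every error term is therefore of the form $\fint_\M g\,\lapse[\psi]\,\d\Haus^2$ with either $\Vert g\Vert_{\Lp^2(\M)}\le C$ (from $E_i$) or $\vert g\vert\le\nicefrac C\Hradius$ pointwise (from the metric replacements), so that Cauchy--Schwarz together with $\Haus^2(\M)=4\pi\Hradius^2+\mathcal O(\Hradius)$ (a consequence of \eqref{estimates_for_k}) gives
\[
 \Bigl|\fint_\M g\,\lapse[\psi]\,\d\Haus^2\Bigr|
  \le\frac{C}{\Haus^2(\M)}\,\Vert g\Vert_{\Lp^2(\M)}\,\Vert\lapse[\psi]\Vert_{\Lp^2(\M)}
  \le\frac{C}{\Hradius^2}\,\Vert\lapse[\psi]\Vert_{\Lp^2(\M)},
\]
which is the claimed inequality; the constant depends only on $m$, ${\outve}$, $\oc$ and $\c$ (through Theorem~\ref{existence} and the hypothesis $(\atime,\Hradius)\in I_{\c}$) and on neither $\psi$ nor $\atime$, the $\psi$-dependence residing solely in the factor $\lapse[\psi]$. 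I expect the one genuinely delicate point to be the bound $\Vert E_i\Vert_{\Lp^2(\M)}\le C$, i.e.\ controlling how far the CMC leaf departs from a round sphere \emph{as measured in the flat metric} $\eukg$, which forces one to pass carefully between $\aoutg[\atime]$- and $\eukg$-geometry via the conformal structure of $\schwarzg$ (and to extract the sharp $\Lp^2$-roundness from \eqref{estimates_for_k} rather than the weaker form of \eqref{estimates_on_conformal_parametrization}); everything else is the quotient rule, integration by parts, and Cauchy--Schwarz.
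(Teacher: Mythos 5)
Your proof is correct and is essentially the paper's own argument: the paper likewise differentiates $\int_{\M<\vartheta>}\outsymbol x_i\d\mug$ together with the area, applies the quotient rule, obtains the main term $3\fint_{\M} \normal_i\lapse[\psi] \d\mug$ from the round-sphere cancellation of $2\normal_i+\H\,(\outsymbol x_i-\centerz_i)$, and bounds the remainder by $C\,\Vert\lapse[\psi]\Vert_{\Lp^2(\M)}\,(\Vert N-\nu\Vert_{\Lp^2(\M)}+\frac C\Hradius)$ before dividing by $\volume{\M}\approx4\pi\Hradius^2$. The only deviations are organizational: you carry out the computation in the Euclidean quantities and convert to the $\aoutg[\atime]*$-quantities at the end, and you eliminate the shift by orthogonality to the Euclidean normal, where the paper instead assumes $\spartial[\vartheta]@\psi=\lapse[\psi]\,\nu$ without loss of generality using diffeomorphism invariance of the coordinate center.
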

\begin{proof}
Let $\psi$ be a evolution of $\M=\M<\Hradius>[\atime]$. Again, we suppress the indices $\atime$ and $\Hradius$. As the Euclidean coordinate center is invariant under diffeomorphisms, we can assume that $\spartial[\vartheta]@\psi=\lapse[\psi]\,\nu$. For the desired inequality, we first approximate the derivation of the numerator ($\centerz<\vartheta>\,\volume{\M<\vartheta>}=\int_{\M<\vartheta>}\outsymbol x_i\d\mug$):
\begin{align*}\hspace{5.5em}&\hspace{-5.5em}
 \vert\partial[\vartheta]@{(\centerz<\vartheta>_i\,\volume{\M<\vartheta>})} - 3\int_{\M} \normal_i\,\lapse[\psi] \d\mug + \H\,\lapse[\psi]\, z_i \d\mug\vert \\
  ={}& \vert\int_{\M} \partial[\vartheta]@{(\outsymbol x_i\circ\psi)} - \H\,\lapse[\psi]\,\outsymbol x_i - 3 \normal_i\,\lapse[\psi] + \H\,\lapse[\psi]\, z_i \d\mug\vert \\
  ={}& \vert\int_{\M} \H\,\lapse[\psi]\,(\outsymbol x_i-\centerz_i)
			+ 2 \normal_i\,\lapse[\psi] \d\mug \vert \\
  \le{}& C\,\Vert\lapse[\psi]\Vert_{\Lp^2(\M)}\,(\Vert N-\nu\Vert_{\Lp^2(\M)}+\frac C\Hradius)
  \le C\,\Vert\lapse[\psi]\Vert_{\Lp^2(\M)}.
\end{align*}
Using the Leibniz formula, we conclude the claimed inequality by
\begin{align*}\hspace{4em}&\hspace{-4em}
 \vert\partial[\vartheta]@{(\centerz<\vartheta>_i)} - 3\fint_{\M} \normal_i\,\lapse[\psi] \d\mug\d\mug\vert \\
  \le{}& \volume{\M}^{-1}\vert\partial[\vartheta]@{(\centerz<\vartheta>_i\,\volume{\M[\vartheta]})}
			+ \centerz_i\int\H\,\lapse[\psi]\d\mug
			- 3\int_{\M} \normal_i\lapse[\psi] \d\mug\vert \\
	\le{}& \frac C{\Hradius^2}\,\Vert\lapse[\psi]\Vert_{\Lp^2(\M)}.\qedhere\smallskip
\end{align*}%
\end{proof}\pagebreak[3]
We can now combine the Lemmas \ref{regularity_of_the_radial_lapse_function} and \ref{regularity_of_the_spacetime_lapse_function} and Proposition \ref{the_moving_of_the_spheres_by_the_lapse_function} to finish the proof of Proposition \ref{leaves_are_almost_concentric}:\pagebreak[1]
\begin{proof}[Proof (of Proposition \ref{leaves_are_almost_concentric})]
By Lemmas \ref{regularity_of_the_radial_lapse_function} and \ref{regularity_of_the_spacetime_lapse_function} and Proposition \ref{the_moving_of_the_spheres_by_the_lapse_function}, there are constants $C=\Cof[m][{\outve}][\oc][\c]$ and $\Hradius_0=\Cof{\Hradius_0}[m][{\outve}][\oc]$ such that for $(\atime,\Hradius)\in I_{\c}$ with $\Hradius\ge\Hradius_0$
\[
 \vert\partial[\atime]@{\centerz<\Hradius>[\atime]}\vert \le C\,\Hradius^{1-{\outve}}, \qquad\qquad
 \vert\partial[\Hradius]@{\centerz<\Hradius>[\atime]}\vert \le \frac C{\Hradius^{\outve}}. \pagebreak[1] \]
Using the regularity of $\aoutsymbol\Phi$, this implies that $I_{\c}$ is open in $\interval*0*1\times\interval{\Hradius_0}\infty$, where $\Hradius_0$ and $\c$ are without loss of generality sufficiently large. As we already know, $I_{\c}$ is non-empty and closed in and therefore equal to $\interval*0*1\times\interval{\Hradius_0}\infty$. Per definition of $I_{\c}$, this concludes the proof.\pagebreak[3]
\end{proof}

\section{The evolution of the CMC center of mass}\label{The_evolution}
In this section, we characterize the evolution of the (abstract and coordinate) CMC center of mass under the Einstein equations. As mentioned in the introduction, the decay assumptions on the momentum density $\momenden$ of $\outM$ are not sufficient to ensure that the ADM linear momentum $\impuls$ of $(\outM,\outg*)$ is well-defined. Thus, we cannot expect that the evolution is characterized solely by mass and ADM linear momentum. We have to replace it by its approximating integrals (see Remark \ref{definition_of_linear_momentum}). Secondly, due to the weak assumptions on the momentum density, we need an additional correction term. This term is given by an integral, too.
\begin{theorem}[Evolution of the CMC leaves]\label{evolution_of_the_leaves}
Let $(\uniM,\unig*)$ be a Lorentzian spacetime and ${\outve}>0$. Let $\lbrace\outM[t]\rbrace_t$ be a $\Ck^2$-asymp\-totically Schwarz\-schildean temporal foliation of $(\uniM,\unig*)$ of order $1+{\outve}$. Let $\lbrace\M<\Hradius>[t]\rbrace$ be the foliation of $\outM[t]$ near infinity by spheres $\M<\Hradius>[t]$ of constant mean curvature $\H<\Hradius>[t]\equiv\nicefrac{{-}2}\Hradius+\nicefrac{4m}{\Hradius^2}$ with respect to the induced metric $\outg[t]*$ on $\outM[t]$. There are constants $\DoATensor C[t]=\Cof[m][\oc[t]][{\outve}]$ and $\DoATensor{\Hradius_0}[t]=\Cof{\DoATensor{\Hradius_0}[t]}[m][\oc[t]][{\outve}]$ such that the Euclidean coordinate center $\centerz<\Hradius>[t]$ of $\M<\Hradius>[t]$ fulfills for $\Hradius>\DoATensor{\Hradius_0}[t]$
\begin{align*}
 \vert \partial[t]@{\centerz<\Hradius>[t]_i} - \frac1{8\pi m}\int_{\M<\Hradius>[t]} \outH[t]\,\normal<\Hradius>[t]_i - \outzFund[t]_{ij}\,{\normal<\Hradius>[t]^j} + \Hradius\;\momenden(\normal<\Hradius>[t])\,{\normal<\Hradius>[t]_i} \d\mug \vert
  \le{}& \frac{\DoATensor C[t]}{\Hradius^{\min\{{\outve},\outdelta\}}}, \labeleq{Evolution_Euclidean_coordinate_center}
\end{align*}
where $\nu<\Hradius>[t]$ denotes the outer unit normal of $\M<\Hradius>[t]$ in $\outM[t]$, $\momenden[t]*(\cdot):=\uniric*(\outnu[t],\cdot)$ the momentum density, $\outnu[t]$ the future-pointing unit normal of $\outM[t]$ in $\uniM$ and the constant $\outdelta$ is as in Definition \ref{Ck_asymptotically_Schwarzschildean_initial_data_set}.
\end{theorem}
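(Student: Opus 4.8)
The plan is to differentiate the Euclidean coordinate center $\centerz<\Hradius>[t]_i=\volume{\M<\Hradius>[t]}^{-1}\int_{\M<\Hradius>[t]}\outsymbol x_i\,\d\mug$ directly along the flow, reduce $\partial[t]@{\centerz<\Hradius>[t]_i}$ to a single scalar --- the normal speed of the leaf inside its slice --- and then read off that scalar from the fact that the mean curvature of $\M<\Hradius>[t]$ does not change with $t$.

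First I would fix a parametrization. I may assume the temporal foliation is orthogonal (the shift is tangential to the slices and enters no integral over $\M<\Hradius>[t]$). Running the implicit function theorem slice by slice as in Metzger's proof of Theorem~\ref{existence} (equivalently, applying Theorem~\ref{existence} to the family $\aoutg[\atime]$ for each fixed $t$), the leaves $\M<\Hradius>[t]$ depend $\Ck^1$ on $t$; I would choose the parametrization whose $t$-velocity in $\uniM$ is purely normal to the leaf, $\ralpha[t]\,\outnu[t]+\arnu<\Hradius>[t]\,\normal<\Hradius>[t]$, where $\arnu<\Hradius>[t]$ is the leaf's lapse in time inside its slice. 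Since $\outnu[t](\outsymbol x_i)=0$ for $i\in\{1,2,3\}$ and $\partial[t]@{\outg[t]_{ij}}=-2\ralpha[t]\,\outzFund[t]_{ij}$, the first variation of area gives
\[\partial[t]@{\centerz<\Hradius>[t]_i}=\frac1{\volume{\M<\Hradius>[t]}}\int_{\M<\Hradius>[t]}\Bigl(\arnu<\Hradius>[t]\,\normal<\Hradius>[t]_i-\arnu<\Hradius>[t]\,\H<\Hradius>[t]\,(\outsymbol x_i-\centerz<\Hradius>[t]_i)-\ralpha[t]\,(\outH[t]-\outzFund[t]_{\nu\nu})\,(\outsymbol x_i-\centerz<\Hradius>[t]_i)\Bigr)\d\mug.\]
Because $\H<\Hradius>[t]\equiv\nicefrac{{-}2}\Hradius+\nicefrac{4m}{\Hradius^2}$ and the leaf is, by \eqref{estimates_for_k}, \eqref{estimates_on_conformal_parametrization}, \eqref{estimates_on_the_normal} together with the almost-concentricity $\volume{\centerz<\Hradius>[t]}\le C\Hradius^{1-{\outve}}$ of Proposition~\ref{leaves_are_almost_concentric}, an $\Lp^2$-small perturbation of the Euclidean sphere of radius $\Hradius$ centred at $\centerz<\Hradius>[t]$, the first two terms collapse to $3\,\arnu<\Hradius>[t]\,\normal<\Hradius>[t]_i$, and $\vert\outzFund[t]\vert\le C\Hradius^{-1-\outdelta}$ makes the last term $O(\Hradius^{-\outdelta})$ after averaging. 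Hence, once $\Vert\arnu<\Hradius>[t]\Vert_{\Lp^2(\M<\Hradius>[t])}\le C\Hradius^{2-\outdelta}$ is known (step~3), the identity becomes $\partial[t]@{\centerz<\Hradius>[t]_i}=3\fint_{\M<\Hradius>[t]}\normal<\Hradius>[t]_i\,\arnu<\Hradius>[t]\,\d\mug+O(\Hradius^{-\min\{{\outve},\outdelta\}})$, which is Proposition~\ref{the_moving_of_the_spheres_by_the_lapse_function} applied to the within-slice part of the motion.

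Next I would compute $\arnu<\Hradius>[t]$. Splitting $\partial[t]@{\H<\Hradius>[t]}\equiv0$ into the contribution of carrying $\M<\Hradius>[t]$ along the slice flow and that of the subsequent normal rearrangement inside $\outM[t]$, Proposition~\ref{change_of_meancurvature} (for the given foliation, with lapse $\ralpha[t]$) and the definition of the stability operator give $\jacobiext<\Hradius>[t](\arnu<\Hradius>[t])=-R[t]$, where $R[t]$ denotes the right-hand side of Proposition~\ref{change_of_meancurvature},
\[R[t]:=\ralpha[t]\bigl(\momenden(\normal<\Hradius>[t])-\div\outzFund[t]_{\nu}+\trtr{\zFund<\Hradius>[t]}{\outzFund[t]}\bigr)+(D_{\normal<\Hradius>[t]}\ralpha[t])\,\outH[t]-2\,\outzFund[t]_{\nu}(\levi<\Hradius>[t]*\ralpha[t]).\]
From \eqref{decay_outzFund} and $\ralpha[t]\to1$ one gets $\Vert R[t]\Vert_{\Lp^2(\M<\Hradius>[t])}\le C\Hradius^{-1-\outdelta}$; since by Lemma~\ref{the_lowest_eigenvalues_of_the_stability_operator} the operator $\jacobiext<\Hradius>[t]$ is invertible with smallest eigenvalue $\approx\nicefrac{6m}{\Hradius^3}$, this yields $\Vert\arnu<\Hradius>[t]\Vert_{\Lp^2}\le C\Hradius^{2-\outdelta}$, closing the loop with step~1. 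More importantly, decomposing $\Lp^2(\M<\Hradius>[t])$ into the span $V$ of the three small eigenfunctions of $\jacobiext<\Hradius>[t]$ (where $\jacobiext<\Hradius>[t]\approx\nicefrac{6m}{\Hradius^3}\,\id$) and its complement (where $\vert\jacobiext<\Hradius>[t]\vert\gtrsim\Hradius^{-2}$), and using that $\normal<\Hradius>[t]_i$ is an approximate eigenfunction of $\jacobiext<\Hradius>[t]$ with defect $O(\Hradius^{-2-{\outve}})$ in $\Lp^2$ --- which is exactly where \eqref{Ricci_inequality}, \eqref{estimates_for_k}, \eqref{estimates_on_conformal_parametrization}, \eqref{estimates_on_the_normal} and Proposition~\ref{leaves_are_almost_concentric} enter --- I would obtain $3\fint_{\M<\Hradius>[t]}\normal<\Hradius>[t]_i\,\arnu<\Hradius>[t]\,\d\mug=-\tfrac1{8\pi m}\,\Hradius\int_{\M<\Hradius>[t]}\normal<\Hradius>[t]_i\,R[t]\,\d\mug+O(\Hradius^{-\min\{{\outve},\outdelta\}})$.

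It then remains to recognize $-\Hradius\int_{\M<\Hradius>[t]}\normal<\Hradius>[t]_i\,R[t]\,\d\mug$ as the flux integrand in \eqref{Evolution_Euclidean_coordinate_center}. Replacing $\ralpha[t]$ by $1$ and dropping the two $\levi<\Hradius>[t]*\ralpha[t]$-terms costs only $O(\Hradius^{-\min\{{\outve},\outdelta\}})$; on the rest, $\div\outzFund[t]_{\nu}$ integrates by parts on the closed surface $\M<\Hradius>[t]$ against $\levi<\Hradius>[t]*\normal<\Hradius>[t]_i$, which equals $\zFund<\Hradius>[t]$ contracted with the tangential part of $\partial_i$ up to $O(\Hradius^{-2})$, while $\trtr{\zFund<\Hradius>[t]}{\outzFund[t]}=\tfrac12\H<\Hradius>[t]\,(\outH[t]-\outzFund[t]_{\nu\nu})+\trtr{\zFundtrf<\Hradius>[t]}{\outzFund[t]}$ with $\vert\zFundtrf<\Hradius>[t]\vert\le C\Hradius^{-2}$ and $\H<\Hradius>[t]\approx\nicefrac{{-}2}\Hradius$; feeding in the Codazzi equation of $\outM[t]\subseteq\uniM$ to trade $\momenden(\normal<\Hradius>[t])$ against $D_{\normal<\Hradius>[t]}\outH[t]$ and $(\outdiv\outzFund[t])(\normal<\Hradius>[t])$, the $\outzFund[t]_{\nu\nu}$-contributions cancel and one is left with $\int_{\M<\Hradius>[t]}\bigl(\outH[t]\,\normal<\Hradius>[t]_i-\outzFund[t]_{ij}\,\normal<\Hradius>[t]^j+\Hradius\,\momenden(\normal<\Hradius>[t])\,\normal<\Hradius>[t]_i\bigr)\d\mug$ up to $O(\Hradius^{-\min\{{\outve},\outdelta\}})$, which is \eqref{Evolution_Euclidean_coordinate_center}. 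The main obstacle I expect is step~3: making the spectral inversion of $\jacobiext<\Hradius>[t]$ quantitative --- controlling both the approximate-eigenfunction defect of $\normal<\Hradius>[t]_i$ and the non-translation part of $\arnu<\Hradius>[t]$ sharply enough that the error stays $\le C\Hradius^{-\min\{{\outve},\outdelta\}}$; the reduction in step~4 is delicate in its sign bookkeeping but otherwise routine decay estimation.
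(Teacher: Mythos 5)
Your proposal follows essentially the same route as the paper: the reduction of $\partial_t\centerz<\Hradius>[t]$ to the mean of $\normal_i$ against the within-slice lapse is Proposition~\ref{the_moving_of_the_spheres_by_the_lapse_function}, the characterization $\jacobiext*\arnu=-R[t]$ via $\partial_t\H<\Hradius>[t]\equiv0$ and Proposition~\ref{change_of_meancurvature} is Corollary~\ref{the_lapse_function}, and your spectral inversion of $\jacobiext*$ using Lemma~\ref{the_lowest_eigenvalues_of_the_stability_operator}, the replacement of eigenfunctions by $\normal_i$, and the integration by parts leading to the flux $\momentum(\normal,\outsymbol e_i)+\Hradius\,\normal_i\,\outsymbol J(\normal)$ is exactly the content and proof of Theorem~\ref{inf_evolution_of_the_leaves}, with the correct constants. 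The only cosmetic difference is that you carry out the instantaneous step inline (and handle the out-of-slice area variation explicitly) rather than quoting Theorem~\ref{inf_evolution_of_the_leaves}; the extra appeal to the Codazzi equation in your last step is unnecessary, since the $\outzFund_{\nu\nu}$-cancellation already comes from combining the integrated-by-parts term with $\trtr\zFund\outzFund\approx\tfrac\H2(\outH-\outzFund_{\nu\nu})$.
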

\begin{remark}[Comparison to the ADM linear momentum]\label{definition_of_linear_momentum}
The first two terms in \eqref{Evolution_Euclidean_coordinate_center} are motivated by the definition of the ADM linear momentum $\impuls$
\begin{align*}
 \impuls_i := \frac1{8\pi}\lim_{\rradius\to\infty} \int_{\{\rad\equiv\rradius\}} \momentum(\partial[\outsymbol x]_i,\normal[\{\rad\equiv\rradius\}]) \d\Haus^2 \quad\text{with}\quad
 \momentum* := \outmc\;\outg* - \outzFund,
\end{align*}
where $\normal[\{\rad\equiv\rradius\}]$ is the normal and $\Haus^2$ is the canonical measure of the Euclidean coordinate sphere $\{\rad\equiv\rradius\}:=\rad^{-1}(\rradius)$ -- both with respect to the Euclidean metric $\eukg$. By Stokes' theorem, we see that the ADM linear momentum is well defined if $\outdiv\momentum*=\overline J$ is small, e.\,g.\ $\vert\overline J\vert\in\Lp^1(\outM)$. Similarly, we see that under this additional assumption the Euclidean coordinate spheres $\{\rad\equiv\rradius\}$ can be replaced by the CMC spheres $\M<\Hradius>$. Thus, the difference of these terms in \eqref{Evolution_Euclidean_coordinate_center} has to be understood as a quasi-local linear momentum on $\M<\Hradius>$. The additional term is necessary due to the fact that we cannot use Stokes' theorem, as the momentum density is not assumed to be small enough.
\end{remark}
\begin{corollary}[Evolution of the leaves -- under stronger assumptions]\label{evolution_of_the_leaves__under_stronger_assumptions}
Let $(\uniM,\unig*)$ be a Lorentzian spacetime and ${\outve}>0$. Let $\lbrace\outM[t]\rbrace_t$ be a $\Ck^2$-asymp\-to\-ti\-cally Schwarz\-schildean temporal foliation of $(\uniM,\unig*)$ of order $1+{\outve}$ and $\vert\momenden[t]\vert\in\Lp^1(\outM[t])$ for all $t$. Let $\lbrace\M<\Hradius>[t]\rbrace_\Hradius$ be the foliation of $\outM[t]$ by spheres $\M<\Hradius>[t]$ of constant mean curvature $\H<\Hradius>[t]\equiv\nicefrac{{-}2}\Hradius+\nicefrac{4m}{\Hradius^2}$ with respect to the induced metric $\outg[t]*$ on $\outM[t]$. There are constants ${\DoATensor C[t]}=\Cof{\DoATensor C[t]}[m][{\oc[t]}][{\outve}]$ and $\DoATensor{\Hradius_0}[t]=\Cof{\DoATensor{\Hradius_0}[t]}[m][{\oc[t]}][{\outve}]$ such that the Euclidean coordinate center $\centerz<\Hradius>[t]$ of $\M<\Hradius>[t]$ with $\Hradius>\DoATensor{\Hradius_0}[t]$ fulfills
\begin{align*}
 \vert \partial[t]@{\centerz<\Hradius>[t]} - \frac\impuls m \vert \le{}& \frac{\DoATensor C[t]}{\Hradius^{\min\{{\outve},\outdelta\}}},
\end{align*}
where $\outdelta$ is as in Definition \ref{Ck_asymptotically_Schwarzschildean_initial_data_set}.
\end{corollary}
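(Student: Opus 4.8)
The plan is to deduce the corollary directly from Theorem~\ref{evolution_of_the_leaves}. That theorem bounds $\partial[t]@{\centerz<\Hradius>[t]_i}$ by $\tfrac1{8\pi m}\int_{\M<\Hradius>[t]}\bigl(\outH[t]\,\nu_i-\outzFund[t]_{ij}\,\nu^j+\Hradius\,\momenden(\nu)\,\nu_i\bigr)\,\d\mug$ up to an error $\le C\,\Hradius^{-\min\{{\outve},\outdelta\}}$, where $\nu$ is the outer unit normal of $\M<\Hradius>[t]$. Fixing $t$ and suppressing it, it suffices to prove that, under the additional hypothesis $\vert\momenden\vert\in\Lp^1(\outM)$, this surface integral converges to $8\pi\,\impuls_i$ with the same rate $\Hradius^{-\min\{{\outve},\outdelta\}}$. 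Throughout, $\Omega_\Hradius\subseteq\outM$ denotes the unbounded region exterior to $\M<\Hradius>$.

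\emph{The first two terms.} Since $\outH=\outtr\,\outzFund$, the first two terms of the integrand form $\outH\,\nu_i-\outzFund_{ij}\,\nu^j=\momentum(\partial[\outsymbol x]_i,\nu)$ with $\momentum=\outH\,\outg-\outzFund$, which is exactly the integrand of the ADM linear momentum of Remark~\ref{definition_of_linear_momentum}; passing from the $\outg$-unit normal and the $\outg$-area element $\d\mug$ of $\M<\Hradius>$ to the Euclidean ones costs only $O(\Hradius^{-\outdelta})$ because $\vert\momentum\vert\le C\rad^{-1-\outdelta}$ while $\vert\outg-\eukg\vert=O(\rad^{-1})$. The divergence theorem on $\Omega_\Hradius$, the Einstein constraint $\outdiv\momentum=\momenden$ of Definition~\ref{Ck_asymptotically_Schwarzschildean_initial_data_set}, and the fact that the flux of $\momentum$ at infinity equals $8\pi\,\impuls_i$ by definition then give $\tfrac1{8\pi}\int_{\M<\Hradius>}\momentum(\partial[\outsymbol x]_i,\nu)\,\d\mug=\impuls_i-\tfrac1{8\pi}\int_{\Omega_\Hradius}\momenden_i\,\d V+O(\Hradius^{-\outdelta})$, the difference between the Euclidean and the $\outg$-divergence of $\momentum$ being a sum of Christoffel terms bounded by $C\rad^{-3-\outdelta}$, hence with tail $O(\Hradius^{-\outdelta})$ over $\Omega_\Hradius$. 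This is the step that uses $\vert\momenden\vert\in\Lp^1$: it makes the tail integral $\int_{\Omega_\Hradius}\momenden_i$ converge, as already noted in Remark~\ref{definition_of_linear_momentum}.

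\emph{The correction term (the crux).} It remains to show that the correction term cancels this tail, that is, $\tfrac\Hradius{8\pi}\int_{\M<\Hradius>}\momenden(\nu)\,\nu_i\,\d\mug=\tfrac1{8\pi}\int_{\Omega_\Hradius}\momenden_i\,\d V+O(\Hradius^{-\min\{{\outve},\outdelta\}})$. On $\M<\Hradius>$ the vector $\Hradius\,\nu$ agrees with the Euclidean position field $\outsymbol x$ up to $O(\Hradius^{1-{\outve}})$ --- this is where Proposition~\ref{leaves_are_almost_concentric} and the conformal parametrization estimates \eqref{estimates_on_conformal_parametrization} and \eqref{estimates_on_the_normal} enter. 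Using this together with $\momenden=\outdiv\momentum$, the Codazzi equation to rewrite $\momenden(\nu)$ on $\M<\Hradius>$ as tangential divergences of $\outzFund|_{\M<\Hradius>}$ (as in the proof of Proposition~\ref{change_of_meancurvature}), and an integration by parts on the sphere against $\levi*\nu_i=O(\Hradius^{-1})$ --- so that only $\vert\outzFund\vert\le C\rad^{-1-\outdelta}$ and $\vert\outlevi*\outzFund\vert\le C\rad^{-2-\outdelta}$, and no second derivatives of $\outzFund$, are needed --- one can recast $\tfrac\Hradius{8\pi}\int_{\M<\Hradius>}\momenden(\nu)\,\nu_i\,\d\mug$ as a Euclidean flux through $\M<\Hradius>$ and, by a second application of the divergence theorem on $\Omega_\Hradius$ --- whose flux at infinity vanishes because $\vert\momenden\vert\in\Lp^1$ --- identify it with $\tfrac1{8\pi}\int_{\Omega_\Hradius}\momenden_i\,\d V$ up to $O(\Hradius^{-\min\{{\outve},\outdelta\}})$. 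Adding the two steps the tail terms $\pm\tfrac1{8\pi}\int_{\Omega_\Hradius}\momenden_i$ cancel, all remaining errors are $O(\Hradius^{-\min\{{\outve},\outdelta\}})$, and together with Theorem~\ref{evolution_of_the_leaves} this gives $\partial[t]@{\centerz<\Hradius>}=\tfrac\impuls m+O(\Hradius^{-\min\{{\outve},\outdelta\}})$, the assertion.

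\emph{Main obstacle.} The hard part is precisely obtaining the cancellation of the second step with a quantitative rate. The crude bound $\vert\momenden\vert\le C\rad^{-2-\outdelta}$ makes the correction term only $O(\Hradius^{1-\outdelta})$, which is useless once $\outdelta\le1$; one has to exploit that the weight $\nu_i$ extracts the $\ell=1$ angular component of $\momenden$ on $\M<\Hradius>$ and that this component is tied, through the divergence theorem on $\Omega_\Hradius$, to the volume tail $\int_{\Omega_\Hradius}\momenden_i$ --- the hypothesis $\vert\momenden\vert\in\Lp^1$ being what simultaneously makes that tail finite and rules out the slowly decaying ($\ell\le1$) contributions to $\momenden$ that would otherwise destroy the rate. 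Keeping the various Schwarzschild-versus-Euclidean and $\M<\Hradius>$-versus-coordinate-sphere conversions uniformly at order $\Hradius^{-\min\{{\outve},\outdelta\}}$ is the technical heart of the argument.
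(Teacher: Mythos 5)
Your starting point -- plugging the additional hypothesis into Theorem \ref{evolution_of_the_leaves} and using the divergence theorem together with the constraint $\outdiv\momentum*=\momenden*$ to compare $\frac1{8\pi}\int_{\M<\Hradius>}\momentum(\normal,\outsymbol e_i)\d\mug$ with $\impuls_i$ -- is exactly the route the paper intends (its proof is the one-line remark after Corollary \ref{evolution_of_the_center}, with Remark \ref{definition_of_linear_momentum} supplying the Stokes argument). The gap is in what you yourself call the crux: the claim that the correction term $\frac\Hradius{8\pi}\int_{\M<\Hradius>}\momenden(\normal)\,\normal_i\d\mug$ agrees with the exterior tail $\frac1{8\pi}\int_{\Omega_\Hradius}\momenden_i\d V$ up to $O(\Hradius^{-\min\{\outve,\outdelta\}})$. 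No identity of this kind is established, and the mechanism you sketch cannot produce one: by the momentum constraint (equivalently the Codazzi computation in the proof of Proposition \ref{change_of_meancurvature}), $\momenden(\normal)$ contains the normal derivative $D_{\normal}(\tr\outzFund)$, which is not a tangential divergence and cannot be integrated by parts along $\M<\Hradius>$; and a ``second application of the divergence theorem on $\Omega_\Hradius$'' would require a vector field whose flux through $\M<\Hradius>$ is $\Hradius\,\momenden(\normal)\,\normal_i$ and whose divergence on $\Omega_\Hradius$ is $\momenden_i$ -- the only field the constraint hands you, $X^j=\momentum(\outsymbol e_i,\cdot)^j$, reproduces the flux $\int\momentum(\normal,\outsymbol e_i)\d\mug$ you already used in the first step, not the correction term.

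Quantitatively, the asserted near-cancellation does not follow from the stated hypotheses: the correction term weighs the pointwise values of $\momenden$ on the single sphere $\M<\Hradius>$ by $\Hradius^{3}$, whereas the tail is a volume integral over the exterior. Take $\outg*=\schwarzg*$ and $\momentum_{ij}=\psi(\rad)\,\outsymbol x_1\,\eukg_{ij}$ (hence $\outzFund*=\tfrac12(\outtr\momentum*)\outg*-\momentum*$), with $\psi$ a bump of height $\rad^{-3-\outdelta}$ and width one on a dyadic annulus; all decay assumptions and $\vert\momenden\vert\in\Lp^1$ hold, yet at radii meeting the bump the correction term is of size $\Hradius^{1-\outdelta}$ (driven by $\Hradius^4\psi'$) while the tail is $O(\Hradius^{-\outdelta})$. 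So your key identity is not a consequence of $\Lp^1$-integrability plus the pointwise decay; given Theorem \ref{evolution_of_the_leaves} and your first step it is in fact equivalent to the corollary's estimate itself, so at the decisive point you have restated what is to be proven rather than proven it. The paper's argument involves no cancellation against an exterior volume integral: once $\vert\momenden\vert\in\Lp^1$ makes Stokes' theorem available, the bracket in \eqref{Evolution_Euclidean_coordinate_center} is read as the quasi-local momentum converging to $\impuls$, the correction term being precisely the piece whose role disappears in that regime (Remark \ref{definition_of_linear_momentum}). To close your gap you would need a direct estimate showing the correction term itself is $O(\Hradius^{-\min\{\outve,\outdelta\}})$ (for instance under a pointwise decay $\vert\momenden\vert\le\oc\,\rad^{-3-\outve}$, or via a coarea argument for almost every $\Hradius$), not an identification with the tail.
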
%
\begin{corollary}[Evolution of the CMC center of mass]\label{evolution_of_the_center}
Let $(\uniM,\unig*)$ be a Lorentzian spacetime and ${\outve}>0$. Let $\lbrace\outM[t]\rbrace_t$ be a $\Ck^2$-asymp\-to\-ti\-cally Schwarz\-schildean temporal foliation of $(\uniM,\unig*)$ of order $1+{\outve}$ and $\vert\momenden[t]\vert\in\Lp^1(\outM[t])$ for all $t$. Let $\centerz<\Hradius>[t]$ be the Euclidean coordinate center of the foliation of $\outM[t]$ near infinity by spheres of constant mean curvature $\H<\Hradius>[t]\equiv\nicefrac{{-}2}\Hradius+\nicefrac{4m}{\Hradius^2}$ with respect to the induced metric $\outg[t]*$ on $\outM[t]$. If the coordinate CMC center of mass
\begin{align*}
 \outcenter[t]_{\text{CMC}} :={}& \lim_{\Hradius\to\infty}\centerz<\Hradius>[t]
\end{align*}
converges at one time $t=t_0\in I$ then it converge for all times $t$ and fulfills
\begin{align*}
 \partial[t]@{\outcenter[t]_{\text{CMC}}} = \frac\impuls m.
\end{align*}
\end{corollary}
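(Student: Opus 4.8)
The plan is to integrate the pointwise-in-$\Hradius$ estimate of Corollary~\ref{evolution_of_the_leaves__under_stronger_assumptions} in the time variable and then let $\Hradius\to\infty$, using the assumed convergence at $t_0$ as the constant of integration. Fix an arbitrary $t\in I$ and let $J\subseteq I$ be the compact interval with endpoints $t_0$ and $t$. The first input is that, for each fixed $\Hradius$, the map $s\mapsto\centerz<\Hradius>[s]$ is $C^1$ on $I$: this is already implicit in Theorem~\ref{evolution_of_the_leaves}, where $\partial[s]@{\centerz<\Hradius>[s]}$ appears, and follows from the smooth dependence of the slices $\outM[s]$ on $s$ together with the non-degeneracy of the stability operator from Lemma~\ref{the_lowest_eigenvalues_of_the_stability_operator} via the implicit function theorem. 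Consequently the fundamental theorem of calculus yields, for all $\Hradius$ large,
\[
 \centerz<\Hradius>[t] = \centerz<\Hradius>[t_0] + \int_{t_0}^{t}\partial[s]@{\centerz<\Hradius>[s]}\,\d s.
\]

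Next I would estimate the integrand uniformly on $J$. Since $\vert\momenden[s]\vert\in\Lp^1(\outM[s])$ for every $s$, the ADM linear momentum $\impuls[s]$ of $\outM[s]$ is well-defined (Remark~\ref{definition_of_linear_momentum}) and Corollary~\ref{evolution_of_the_leaves__under_stronger_assumptions} applies, giving
\[
 \vert\partial[s]@{\centerz<\Hradius>[s]}-\frac{\impuls[s]}m\vert \le \frac{\DoATensor C[s]}{\Hradius^{\min\{{\outve},\outdelta\}}} \qquad\text{for all } s\in J,\ \Hradius>\DoATensor{\Hradius_0}[s].
\]
Because the decay constants $\oc[s]$ and $\outdelta[s]$ of the temporal foliation are bounded on the compact set $J$, the constants delivered by Corollary~\ref{evolution_of_the_leaves__under_stronger_assumptions} may be replaced by constants $C_J$ and $\Hradius_J$ depending only on $m$, ${\outve}$ and $J$; hence $\partial[s]@{\centerz<\Hradius>[s]}\to\nicefrac{\impuls[s]}m$ uniformly on $J$ as $\Hradius\to\infty$.

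Then I would pass to the limit $\Hradius\to\infty$ in the integrated identity: by hypothesis $\centerz<\Hradius>[t_0]\to\outcenter[t_0]_{\text{CMC}}$, while the uniform estimate forces $\int_{t_0}^{t}\partial[s]@{\centerz<\Hradius>[s]}\,\d s\to\int_{t_0}^{t}\nicefrac{\impuls[s]}m\,\d s$, the difference being at most $\vert t-t_0\vert\,C_J\,\Hradius^{-\min\{{\outve},\outdelta\}}$. Therefore $\centerz<\Hradius>[t]$ converges for every $t\in I$, so $\outcenter[t]_{\text{CMC}}:=\lim_{\Hradius\to\infty}\centerz<\Hradius>[t]$ is well-defined for all $t$ and
\[
 \outcenter[t]_{\text{CMC}} = \outcenter[t_0]_{\text{CMC}} + \int_{t_0}^{t}\frac{\impuls[s]}m\,\d s.
\]
Finally, $s\mapsto\nicefrac{\impuls[s]}m$ is continuous on $I$, being on each compact subinterval a uniform limit of the continuous maps $s\mapsto\partial[s]@{\centerz<\Hradius>[s]}$; differentiating the last identity in $t$ therefore gives $\partial[t]@{\outcenter[t]_{\text{CMC}}}=\nicefrac{\impuls[t]}m$, which is the claim.

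The only genuine obstacle I anticipate is the uniformity of the constants in $t$: Corollary~\ref{evolution_of_the_leaves__under_stronger_assumptions} produces, for each fixed time, a constant $\DoATensor C[t]$ that a priori might grow without bound as $t$ varies, whereas the argument above needs a bound uniform on compact time intervals. This should follow from the mild observation that the decay constants $\oc[t]$ of a $\Ck^2$-asymptotically Schwarz\-schildean temporal foliation are locally bounded in $t$ --- a condition strictly weaker than the uniform one in Definition~\ref{Ck_asymptotically_Schwarzschildean_foliation}; failing that, one simply restricts the assertion to time intervals on which such a bound is available. Everything else is a routine application of the fundamental theorem of calculus and of uniform (equivalently, dominated) convergence.
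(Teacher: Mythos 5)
Your proof is correct and follows essentially the same route as the paper, which simply declares Corollary \ref{evolution_of_the_center} to be a direct implication of Theorem \ref{evolution_of_the_leaves} (via Corollary \ref{evolution_of_the_leaves__under_stronger_assumptions}) and leaves exactly this integrate-in-time, uniform-convergence-in-$\Hradius$ argument to the reader. Your closing remark on the local-in-$t$ boundedness of the decay constants addresses a subtlety the paper passes over silently, and your way of handling it is appropriate.
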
%
Both corollaries are direct implications of Theorem \ref{evolution_of_the_leaves}.\smallskip\pagebreak[2]

By Proposition \ref{the_moving_of_the_spheres_by_the_lapse_function}, $\spartial[t]@{\centerz<\Hradius>[t]_i}$ is asymp\-to\-ti\-cally characterized by the lapse function $\tnu<\Hradius>[t]$ of the evolution of $\M<\Hradius>[t]$ (in time $t$ for fixed $\Hradius$), i.\,e. for any smooth evolution $\psi:\interval-{\varrho_0}{\varrho_0}\times\M<\Hradius>[t]\to\uniM$ with $\M<\Hradius>[t+\delta t]=\psi(\delta t,\M<\Hradius>[t])$, this lapse function is defined by
\begin{align*}
 \partial[\varrho]@\psi<\varrho=0> ={}& \tnu<\Hradius>[t]\,\normal<\Hradius>[t] + \rbeta<\Hradius>[t]^\psi, \labeleq{the_lapse_function_eq}
\end{align*}
where $\rbeta<\Hradius>[t]^\psi\in\X(\M<\Hradius>[t])$ is the shift of $\psi$ on $\M<\Hradius>[t]$. By Proposition \ref{change_of_meancurvature} this lapse function $\tnu$ is characterized in the following way:
\begin{corollary}[the lapse function]\label{the_lapse_function}
If is the lapse function on $\M<\Hradius>[t]$ as in \eqref{the_lapse_function_eq} then -- omitting the $t$ and $\Hradius$ indices --
\begin{align*}
\jacobiext*\tnu ={}& \ralpha(\div\outzFund_{\nu} - \outsymbol J(\normal) - \trtr\zFund\outzFund) - D_{\normal}\ralpha\;\tr\outzFund + 2\outzFund_{\nu}(\levi*\ralpha),
	\labeleq{jacobiext_tnu}
\end{align*}
where $\jacobiext*$ denote the stability operator on $\M$.
\end{corollary}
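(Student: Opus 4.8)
The plan is to read \eqref{jacobiext_tnu} directly off Proposition~\ref{change_of_meancurvature}, using crucially that the prescribed mean curvature $\H<\Hradius>[t]\equiv\nicefrac{{-}2}\Hradius+\nicefrac{4m}{\Hradius^2}$ of the leaves does not depend on $t$, so that its $t$-derivative vanishes identically. Fix $t_0$, set $\M:=\M<\Hradius>[t_0]$, and suppress the indices $t_0$ and $\Hradius$. First I would observe that the time evolution of $\M<\Hradius>[t]$ splits into a motion along the temporal foliation $\Phi$ -- which is orthogonal, $\left.\partial[t]@\Phi\right|_{\outM[t]}=\ralpha[t]\,\outnu[t]$ by Definition~\ref{Ck_asymptotically_Schwarzschildean_foliation}, so it has lapse $\ralpha$ and no shift -- together with an additional motion inside the slice $\outM[t]$, whose lapse is $\tnu$ and whose shift is $\rbeta<\Hradius>[t]^\psi$ according to \eqref{the_lapse_function_eq}. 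By the chain rule, the (vanishing) $t$-derivative of the mean curvature of $\M<\Hradius>[t]$ in $\outM[t]$ is then the sum of the two corresponding variations, which I would compute separately.

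The first variation is the change of the mean curvature of the fixed surface $\M$ as the slice $\outM[t]$ and its metric $\outg[t]$ are transported along $\Phi$; this is exactly the quantity Proposition~\ref{change_of_meancurvature} evaluates when applied to the orthogonal foliation $\Phi$ and the closed hypersurface $\M$, and its value is the right-hand side of the identity stated there. The second variation is the change of the mean curvature, now computed with the frozen metric $\outg[t_0]$, under the within-slice deformation of $\M$ with velocity $\tnu\,\nu+\rbeta<\Hradius>[t_0]^\psi$; by the first-variation formula for the mean curvature -- equivalently, by the definition of the stability operator $\jacobiext*$ as the linearization of the graph mean curvature map $f\mapsto\H(\graph f)$ -- this equals $\jacobiext*\tnu+\rbeta<\Hradius>[t_0]^\psi(\H<\Hradius>[t_0])$, and the tangential term vanishes since $\H<\Hradius>[t_0]$ is constant on $\M$. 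Adding the two contributions and using $\partial[t]@{\H<\Hradius>[t]}=0$ shows that $\jacobiext*\tnu$ equals the right-hand side of Proposition~\ref{change_of_meancurvature} with the opposite sign; a term-by-term comparison (recalling $\momenden=\outsymbol J$) identifies this with the right-hand side of \eqref{jacobiext_tnu}.

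I expect the only delicate part to be the bookkeeping behind this splitting: one has to separate cleanly the variation of the ambient data -- the moving slice $\outM[t]$ together with its metric $\outg[t]$ -- from the variation of the surface inside a frozen slice, so that the standard first-variation formula applies with the correct unit normal $\nu$ (the one of $\M$ in $\outM[t_0]$) and the correct metric $\outg[t_0]$; and one has to check that the $\tnu$ occurring in \eqref{the_lapse_function_eq} is indeed the normal component, within the slice, of the velocity of the leaf. Both become transparent once $\Phi$ is used to trivialize a neighborhood of infinity in $\uniM$ as $I\times\outM$. No estimates enter, only the chain rule and the definition of $\jacobiext*$.
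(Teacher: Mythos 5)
Your argument is correct and is essentially the paper's own: the corollary is obtained exactly by differentiating the ($t$-independent) prescribed mean curvature $\H<\Hradius>[t]\equiv\nicefrac{-2}\Hradius+\nicefrac{4m}{\Hradius^2}$, splitting the vanishing derivative into the ambient variation along the orthogonal foliation $\Phi$ (Proposition~\ref{change_of_meancurvature}) plus the within-slice variation $\jacobiext*\tnu$ (the tangential part dropping out since $\H$ is constant), which is the same chain-rule decomposition the paper carries out explicitly in \eqref{stability_operator_on_arnu} for the artificial-time direction. Your sign bookkeeping and the identification $\momenden=\outsymbol J$ are also as in the paper, so nothing is missing.
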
\medskip\pagebreak[3]

Now, we can continue with the proof of Theorem \ref{evolution_of_the_leaves}:
\begin{proof}[Proof (of Theorem \ref{evolution_of_the_leaves})]
By Proposition \ref{the_moving_of_the_spheres_by_the_lapse_function} and Corollary \ref{the_lapse_function}, we conclude the claim using the following Theorem \ref{inf_evolution_of_the_leaves}.
\end{proof}\pagebreak[2]

In view of Proposition \ref{the_moving_of_the_spheres_by_the_lapse_function}, identity \eqref{the_lapse_function_eq} and Corollary \ref{the_lapse_function}, the following Theorem \ref{inf_evolution_of_the_leaves} is the instantaneous version of Theorem \ref{evolution_of_the_leaves}. We use the expression \emph{instantaneous} as no \lq real derivative\rq\ is used, but only information of the (abstract) initial data set (and the lapse function $\ralpha$).\footnote{Note that the lapse function $\ralpha$ does in fact characterize the \lq infinitesimal evolution\rq\ of spacelike hypersurface under the Einstein equations in a Lorentzian manifold.}
\begin{theorem}[Instantaneous evolution of the leaves]\label{inf_evolution_of_the_leaves}
Let $\ve>0$	 and $(\outM,\outg*,\outsymbol x,\outzFund,\enden,\outsymbol J,\ralpha)$ be an arbitrary $\Ck^2$-asymp\-to\-ti\-cal\-ly Schwarz\-schildean (abstract) initial data set of order $1+{\outve}$ and let $\lbrace\M<\Hradius>\rbrace_\Hradius$ be its foliation near infinity by constant mean curvature surfaces. There are constants $C=\Cof[m][\outve][\oc]$ and $\Hradius_0=\Cof{\Hradius_0}[m][\outve][\oc]$ such that the lapse function $\tnu<\Hradius>$ of the leaf $\M<\Hradius>$ with mean curvature $\H\equiv\nicefrac{-2}\Hradius+\nicefrac{4m}{\Hradius^2}$ and $\Hradius\ge\Hradius_0$ fulfills
\begin{align*}
 \vert3\fint_{\M} \normal<\Hradius>_i\,\tnu<\Hradius> \d\mug - \frac1{8\pi m} \int \momentum(\normal,\outsymbol e_i) + \Hradius\cdot\normal_i\cdot\outsymbol J(\normal) \d\mug\vert \le{}& \frac C{\Hradius^{\min\{{\outve},\outdelta\}}},
\end{align*}
where $\outdelta$ is as in Definition \ref{Ck_asymptotically_Schwarzschildean_initial_data_set} and $\tnu<\Hradius>$ is characterized by -- omitting the index $\Hradius$ --
\begin{align*}
 \jacobiext*\tnu ={}& \ralpha(\div\outzFund_{\normal} - \outsymbol J(\normal) - \trtr\zFund\outzFund) - D_{\normal}\ralpha\;\tr\outzFund + 2\outzFund_{\normal}(\levi*\ralpha),
\end{align*}
where $\jacobiext*$ denote the stability operator on $\M$.
\end{theorem}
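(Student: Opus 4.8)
The plan is to compute $3\fint_{\M}\normal_i\,\tnu$ (writing $\tnu$ for $\tnu<\Hradius>$ and suppressing the index $\Hradius$ throughout) directly from the elliptic equation $\jacobiext*\tnu = F$ that characterises it, where $F := \ralpha(\div\outzFund_{\normal} - \outsymbol J(\normal) - \trtr\zFund\outzFund) - D_{\normal}\ralpha\,\tr\outzFund + 2\outzFund_{\normal}(\levi*\ralpha)$. Two facts drive the argument. By Lemma \ref{the_lowest_eigenvalues_of_the_stability_operator}, $\jacobiext*$ --- of the form $\laplace + V$ with $V$ a function, hence self-adjoint --- is invertible, with exactly three eigenvalues of size $6m\Hradius^{-3}(1 + O(\Hradius^{-\ve}))$ and all other eigenvalues of size $\gtrsim\Hradius^{-2}$; and since $\M<\Hradius>$ is $\Hk^2$-close to a Euclidean coordinate sphere of radius $\approx\Hradius$ centred at $\centerz<\Hradius>$ (by \eqref{estimates_on_conformal_parametrization}--\eqref{estimates_on_the_normal} and Proposition \ref{leaves_are_almost_concentric}), on which the components $\normal_i$ of the normal are exact $\ell=1$ eigenfunctions, one has $\jacobiext*\normal_i = -6m\Hradius^{-3}\normal_i + s_i$ with $s_i$ small (checked by inserting the Weingarten map together with $\Vert\zFundtrf\Vert\lesssim\Hradius^{-2}$ from Theorem \ref{existence} and \eqref{Ricci_inequality} for $\aoutric(\normal,\normal)$).

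First I would test $\jacobiext*\tnu = F$ against $\normal_i$ and use self-adjointness, $\fint_{\M}\normal_i\,F\,\d\mug = \fint_{\M}(\jacobiext*\normal_i)\,\tnu\,\d\mug$; substituting $\jacobiext*\normal_i = -6m\Hradius^{-3}\normal_i + s_i$ and $|\M| = 4\pi\Hradius^2 + O(\Hradius)$, this rearranges into
\[
 3\fint_{\M}\normal_i\,\tnu\,\d\mug = \frac1{8\pi m}\,\Hradius\int_{\M}\normal_i\, F\,\d\mug + (\text{error}),
\]
with $(\text{error})$ of order $\Hradius\bigl|\fint_{\M}s_i\,\tnu\,\d\mug\bigr|$ up to lower-order terms. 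It then remains to identify $\Hradius\int_{\M}\normal_i\,F$ with $\int_{\M}\momentum(\normal,\outsymbol e_i) + \Hradius\,\normal_i\,\outsymbol J(\normal)\,\d\mug$ up to $O(\Hradius^{-\min\{\ve,\outdelta\}})$. Using $\ralpha[\schws] = 1 + O(\Hradius^{-1})$, $|D\ralpha[\schws]| = O(\Hradius^{-2})$, and the decay of $\outzFund$ and of $\ralpha - \ralpha[\schws]$, the last two terms of $F$ and the replacement of $\ralpha$ by $1$ in the first group cost only $O(\Hradius^{-\min\{\ve,\outdelta\}})$ after multiplying by $\Hradius$ and integrating over $\M$. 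For the surviving piece one integrates the $\div\outzFund_{\normal}$-term by parts on $\M$ (using $\levi*\normal_i = -\tfrac12\H\,(\outsymbol e_i - \normal_i\normal) + O(\Hradius^{-2})$), writes $\trtr\zFund\outzFund = \tfrac12\H(\outmc - \outzFund(\normal,\normal)) + O(\Hradius^{-3-\outdelta})$, and, using $\tfrac12\Hradius\H = -1 + O(\Hradius^{-1})$, observes that the $\outzFund(\normal,\normal)$-contributions cancel, leaving $\int_{\M}\momentum(\normal,\outsymbol e_i)$ together with the correction $\Hradius\int_{\M}\normal_i\,\outsymbol J(\normal)$ coming directly from the $\outsymbol J(\normal)$-term of $F$ --- the Codazzi identity, used exactly as in the proof of Proposition \ref{change_of_meancurvature}, making all signs consistent. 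The Sobolev inequality of Theorem \ref{existence} is used throughout to pass between pointwise and integral bounds.

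The hard part will be the error term $\Hradius\fint_{\M}s_i\,\tnu\,\d\mug$ in the first step. Because the decay hypotheses are weak (small $\outdelta$, no Regge--Teitelboim symmetry), $F$ and hence $\tnu$ are not small: one only gets $\Vert F\Vert_{\Lp^2(\M)}\sim\Hradius^{-1-\outdelta}$, and $\Vert\tnu\Vert_{\Lp^2(\M)}$ is as large as $\Hradius^{2-\outdelta}$, its size dominated by the projection of $\tnu$ onto the three-dimensional low eigenspace (where $(\jacobiext*)^{-1}$ has norm $\sim\Hradius^3$). So one cannot bound the error by the crude product $\Vert s_i\Vert_{\Lp^2(\M)}\Vert\tnu\Vert_{\Lp^2(\M)}$; instead everything must be split along the spectral decomposition of $\jacobiext*$. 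The high-eigenspace contribution is harmless, since there $\Vert(\jacobiext*)^{-1}\Vert\lesssim\Hradius^2$ and, after pairing via self-adjointness, it is $\Vert F\Vert$ rather than $\Vert\tnu\Vert$ that enters; the low-eigenspace contribution must be shown small by exploiting that the three low eigenvalues differ from one another only by $O(\Hradius^{-3-\ve})$ and that $\normal_i$ is $\Hk^2$-close to the low eigenspace, which forces cancellation in $\fint_{\M}s_i\,\tnu\,\d\mug$ beyond the naive estimate. Establishing the necessary quantitative closeness of $\normal_i$ to the exact low eigenspace of $\jacobiext*$ --- a sharpening of \eqref{estimates_on_the_normal}, obtained from the round-sphere approximation of $\M<\Hradius>$ and the spectral gap of Lemma \ref{the_lowest_eigenvalues_of_the_stability_operator} --- and the resulting bookkeeping of all error terms against the target rate $\Hradius^{-\min\{\ve,\outdelta\}}$ is the heart of the matter.
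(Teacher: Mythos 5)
Your proposal is correct and follows essentially the same route as the paper's proof: both arguments pair the equation $\jacobiext*\tnu=F$ against (approximately) translational eigenfunctions via self-adjointness and the spectral information of Lemma \ref{the_lowest_eigenvalues_of_the_stability_operator}, handle the dangerous error term exactly as you describe by splitting along the spectrum and using the closeness of the $\normal_i$ to the three-dimensional small-eigenvalue space, and then identify $\Hradius\int\normal_i F$ with the momentum integrand through the same decay estimates, Codazzi identity, and integration by parts. The only organizational difference is that the paper works with exact low eigenfunctions $f_i$ (so that $\int f_i\,\tnu=\lambda_i^{-1}\int f_i F$ holds exactly) and afterwards transfers to the $\normal_i$ via an almost-identity isomorphism between $\lin\{f_i\}$ and $\lin\{\normal_i\}$, whereas you use the $\normal_i$ directly as approximate eigenfunctions and absorb the discrepancy into the residual $s_i$ -- an equivalent bookkeeping.
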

\begin{proof}
Omit the index $\Hradius$. Let $\{f_i\}$ be an $\Lp^2$-orthonormal set of eigenfunctions of the stability operator $\jacobiext*$ corresponding to the eigenvalues $\lambda_i$ with $\vert\lambda_i\vert\le\nicefrac1{\Hradius^2}$. By characterization of the lapse function, we see
\begin{align*}
 \int_{\M} f_i\tnu \d\mug
	= \int\frac{f_i}{\lambda_i}(\vphantom{\frac\cdot\cdot}\ralpha(\div\outzFund_{\normal} - \outsymbol J(\normal) - \trtr\zFund\outzFund) - D_{\normal}\ralpha\;\tr\outzFund + 2\outzFund_{\normal}(\levi*\ralpha)) \d\mug.
\end{align*}
By the decay assumption on $\ralpha$ and $\outzFund*$ and the fact that $\M$ is almost asymptotically concentric by Proposition \ref{leaves_are_almost_concentric}, we conclude
\begin{align*}
 \vert D_{\normal}\ralpha\;\tr\outzFund + 2\outzFund_{\normal}(\levi*\ralpha)\vert \le{}& \frac C{\Hradius^{3+{\outve}}}, &
 \vert \ralpha(\div\outzFund_{\normal} - \outsymbol J(\normal)) - (\div\outzFund_{\normal} - \outsymbol J(\normal)) \vert \le{}& \frac C{\Hradius^{3+{\outve}}}.
\end{align*}
Considering additionally the decay \eqref{estimates_for_k} of $\zFund$, we furthermore get
\begin{align*}
 \vert\alpha\;\trtr\zFund\outzFund-\frac\H2\troutzFund\vert \le{}& \frac C{\Hradius^{3+\min\{{\outve},\outdelta\}}}.
\end{align*}
The only remaining term that we need to understand is
{\begin{equation*}\hspace{-1.1em}
 \int f_i\,(\outsymbol J(\normal) + \frac{\H\,\troutzFund}2 - \div\outzFund_{\normal}) \d\mug
	= \int \outzFund(\normal,\levi*f_i) + f_i\,(\outsymbol J(\normal) + \frac{\H\,\troutzFund}2) \d\mug.
	\labeleq{inf_evolution_of_the_leaves_left_1}\hspace{-1em}
\end{equation*}
}

To obtain the desired result, we now have to replace $f_i$ by the components of the normal. By definition of the stability operator and the assumed decay of $\outg*-\schwarzg$, we see
\[ \Vert\laplace f_i + (\frac2{\Hradius^2}-\frac{4m}{\Hradius^3}) f_i\Vert_{\Lp^2(\M)} \le \frac C{\Hradius^{2+{\outve}}}. \]
Thus, the $f_i$ are comparable to the first three non-constant $\Lp^2$-orthonormal eigenfunctions $g_i$ of the Laplace -- in particular the linear span $\lin\{f_i\}_i$ is three-dimen\-sional. In Euclidean space, the components $N_i$ of the normal to the sphere $\lbrace\rad\equiv\rradius\rbrace$ are eigenfunctions of the Laplace. Using the inequality \eqref{estimates_on_the_normal} bounding $\normal-N$, we thus conclude
\[ \Vert \normal_i - \sum_j(\int g_j\cdot\normal_i\d\mug)\; g_j\Vert_{\Hk^1(\M)} \le \frac C{\Hradius^{\outve}}. \]
Thus, there is an isomorphism $T:\lin\{\normal_i\}_{i=1}^3\to\lin\{f_i\}_{i=1}^3$ with 
\[ \Vert T-\id_{\Lp^2} \Vert_{\mathcal L(\lin\{\normal_i\}_{i=1}^3;\Lp^2(\M))} +\Vert T^{-1}-\id_{\Lp^2} \Vert_{\mathcal L(\lin\{f_i\}_{i=1}^3;\Lp^2(\M))} \le \frac C{\Hradius^{\min\{{\outve},\outdelta\}}}, \]
where the norm on the left hand side denotes the operator norm between these subspaces of $\Lp^2$ and $\Lp^2$. By Lemma \ref{the_lowest_eigenvalues_of_the_stability_operator} and \eqref{inf_evolution_of_the_leaves_left_1}, we conclude 
\begin{align*}
 \vert \int_{\M} \normal_i\,\tnu \d\mug + \frac{\Hradius^3}{6m}\int \outzFund(\normal,\levi*\normal_i) + \normal_i\,(\outsymbol J(\normal) - \frac\troutzFund\Hradius) \d\mug\vert
	\le{}& C\Hradius^{1-\min\{{\outve},\outdelta\}}\,\Vert \normal_i\Vert_{\Lp^2(\M)}. \labeleq{inf_evolution_of_the_leaves_left_2}
\end{align*}
We compare $\normal_i$ with $N_i$ and recall $\levi[e]*N_i=\nicefrac1\Hradius\;(e_i-N_i)$ on the Euclidean sphere $\lbrace\rad\equiv\rradius\rbrace$ with respect to the Euclidean metric $\eukg$. Using the constant given by $\H$, the integral asymptotically simplifies to
\begin{align*}\hspace{-2em}
 \frac{{-}\Hradius^3}{6m}\int (\frac{\outzFund(\normal,\outsymbol e_i)}\Hradius + \normal_i\,(\outsymbol J(\normal) - \frac\outmc\Hradius)) \d\mug
	={}& \frac{\Hradius^2}{6m} \int (\momentum(\normal,\outsymbol e_i) + \Hradius\,\normal_i\;\outsymbol J(\normal)) \d\mug, \hspace{-2em}\labeleq{inf_evolution_of_the_leaves_left_3}
\end{align*}
i.\,e.
\begin{align*}
 \vert3\fint_{\M} \normal_i\,\tnu \d\mug - \frac1{8\pi m} \int (\momentum(\normal,\outsymbol e_i) + \Hradius\,\normal_i\;\outsymbol J(\normal)) \d\mug\vert
   \le{}& \frac C{\Hradius^{\min\{{\outve},\outdelta\}}}.\qedhere
\end{align*}
\end{proof}\pagebreak[3]

\section{Calculating the Euclidean coordinate center of a CMC leaf and comparing the coordinate CMC and ADM center of mass}
\label{Calculating_the_Euclidean_coordinate_center_of_a_CMC_leaf_and___}
Now, we use our new understanding of the time evolution of the CMC spheres and the Construction \ref{Artificial_Lorentzian_manifolds} to (asymptotically) calculate the Euclidean coordinate center of a leaf of the CMC foliation.
\begin{corollary}[Euclidean coordinate center of a leaf]\label{euclidean_coordinate_center_of_a_leave}
Let $(\outM,\outg*)$ be a $\Ck^2$-asymptotically Schwarzschildean three-dimensional Riemannian manifold of order $1+{\outve}$ with ${\outve}>0$. Further denote by $\lbrace\M<\Hradius>\rbrace_\Hradius$ the foliation of $\outM$ near infinity by spheres of constant mean curvature $\H<\Hradius>\equiv\nicefrac{{-}2}\Hradius+\nicefrac{4m}{\Hradius^2}$. There are constants $C=\Cof[m][{\outve}][\oc]$ and $\Hradius_0=\Cof{\Hradius_0}[m][{\outve}][\oc]$ such that the Euclidean coordinate center $\centerz<\Hradius>$ of any leaf $\M<\Hradius>$ with $\Hradius>\Hradius_0$ fulfills -- omitting the index $\Hradius$ --
\begin{align*}
 \vert \centerz_i - \frac1{16\pi m} \int_{\sphere^2_\Hradius(0)}\sum_{j=1}^3(
																					\outsymbol x_i(\partial[\outsymbol x]_j@{\outg_{jk}}-\partial[\outsymbol x]_k@{\outg_{jj}})
																				- (\outg_{ij}\,\frac{\outsymbol x^j}\rad - \outg_{jj}\,\frac{\outsymbol x_i}\rad))\d\Haus^2 \vert \le{}& \frac C{\Hradius^{\outve}},
\end{align*}
where $\Haus^2$ is the canonical measure of the Euclidean coordinate sphere $\sphere^2_\Hradius(0):=\rad^{-1}(\Hradius)$.
\end{corollary}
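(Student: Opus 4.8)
The plan is to compute the Euclidean coordinate center $\centerz_i=\vert\M\vert^{-1}\int_\M\outsymbol x_i\,\d\Haus^2$ of a single leaf $\M=\M<\Hradius>$ by exploiting the artificial spacetime of Construction~\ref{Artificial_Lorentzian_manifolds}, along which the CMC foliation deforms from the concentric Schwarzschild spheres (at $\atime=0$, where $\centerz_i=0$) to the leaves of $(\outM,\outg*)$ (at $\atime=1$). Thus I would write
\[
 \centerz<\Hradius>_i = \centerz<\Hradius>[1]_i - \centerz<\Hradius>[0]_i = \int_0^1 \partial[\atime]@{\centerz<\Hradius>[\atime]_i} \d\atime,
\]
and apply Proposition~\ref{the_moving_of_the_spheres_by_the_lapse_function} (movement of the spheres by the lapse function) to the $\atime$-evolution $\aoutsymbol\Phi$, whose lapse function in spacetime is $\arnu<\Hradius>[\atime]$. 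This reduces the claim to controlling $3\fint_{\M<\Hradius>[\atime]}\normal<\Hradius>[\atime]_i\,\arnu<\Hradius>[\atime]\d\mug$ up to an error of order $\Hradius^{-2}\Vert\arnu\Vert_{\Lp^2}$, which by Lemma~\ref{regularity_of_the_spacetime_lapse_function} is of order $\Hradius^{-{\outve}}$ after integration in $\atime$ — well within the claimed error.

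Next I would identify $3\fint_{\M}\normal_i\,\arnu\d\mug$ using the characterization of $\arnu$ via the stability operator: by Definition and Lemma~\ref{change_of_meancurvature_sc}, $\jacobiext<\Hradius>[\atime]\arnu<\Hradius>[\atime]=-\partial[\varrho]<\varrho=\atime>@{\H<\Hradius>[\atime](\varrho)}$, and the proof there shows this equals (up to lower order) $-D_{\normal}(\aouttr[\atime]\aoutzFund)+(\aoutdiv[\atime]\aoutzFund)(\normal)+\div(\aoutzFund_{\normal})-\trtr\zFund\aoutzFund$ with $-2\,\aoutzFund_{ij}=\outg_{ij}-\schwarzg_{ij}$. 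I would then run exactly the argument of the proof of Theorem~\ref{inf_evolution_of_the_leaves}: expand $\arnu$ in eigenfunctions $f_i$ of $\jacobiext*$ with small eigenvalue, use Lemma~\ref{the_lowest_eigenvalues_of_the_stability_operator} (eigenvalue $\approx 6m/\Hradius^3$), replace $f_i$ by the normal components $\normal_i$ via the isomorphism $T$ constructed there, and use $\levi[e]*N_i=\nicefrac1\Hradius(e_i-N_i)$ on the Euclidean sphere to integrate by parts. Since the matter terms $\outsymbol J$, $\ralpha-\ralpha[\schws]$ are absent here (the artificial spacetime has $\ralpha\equiv1$ and all the \lq momentum density\rq\ enters only through $\aoutzFund$), the $\Hradius\cdot\normal_i\cdot\outsymbol J(\normal)$ correction term disappears and what remains is a pure surface integral in $\outg-\schwarzg$ and its first derivatives.

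The final step is to integrate in $\atime$ and recognize the resulting expression as the stated integral. Because $\aoutg[\atime]*-\schwarzg=\atime(\outg*-\schwarzg)$ depends linearly on $\atime$, each term in $\partial[\atime]@{\centerz<\Hradius>[\atime]_i}$ is, to leading order, independent of $\atime$ (the $\Hradius^{-3}$-weight from the eigenvalue, the measure, and the normal are all Schwarzschildean plus corrections absorbed into the error), so $\int_0^1(\cdot)\d\atime$ simply reproduces the $\atime=1$ integrand; here I would be careful to check that the $\atime$-dependence of $\mug$, $\normal$, and the eigenvalue contributes only to the error term, using the decay $\vert\aoutzFund\vert\le\nicefrac C{\Hradius^{1+{\outve}}}$. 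Converting the integral over $\M<\Hradius>$ to one over the Euclidean coordinate sphere $\sphere^2_\Hradius(0)$ — again legitimate up to order $\Hradius^{-{\outve}}$ by \eqref{estimates_on_conformal_parametrization}, \eqref{estimates_on_the_normal}, and Proposition~\ref{leaves_are_almost_concentric} — and unwinding $\aoutdiv$, $\aouttr$ in coordinates yields $\tfrac1{16\pi m}\int_{\sphere^2_\Hradius(0)}\sum_j(\outsymbol x_i(\partial[\outsymbol x]_j@{\outg_{jk}}-\partial[\outsymbol x]_k@{\outg_{jj}})-(\outg_{ij}\tfrac{\outsymbol x^j}\rad-\outg_{jj}\tfrac{\outsymbol x_i}\rad))\d\Haus^2$. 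The main obstacle I anticipate is bookkeeping: tracking which of the many lower-order terms (from $T-\id$, from $\ralpha[\schws]$-corrections in the Schwarzschild background, from the $\atime$-dependence of the geometry, and from the $\M<\Hradius>$-versus-$\sphere^2_\Hradius(0)$ comparison) genuinely contribute only $O(\Hradius^{-{\outve}})$, and verifying that the leading coordinate expression assembles into precisely the divergence-type integrand above rather than a term differing by an exact divergence on the sphere.
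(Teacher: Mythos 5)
Your overall route is exactly the paper's: the paper proves this corollary by inserting the artificial initial data of Construction~\ref{Artificial_Lorentzian_manifolds} (lapse $\equiv1$, ${-}2\,\aoutzFund_{ij}=\outg_{ij}-\schwarzg_{ij}$) into Theorem~\ref{inf_evolution_of_the_leaves}, which yields the asymptotic ODE \eqref{artificial_mfld_ode_zentrum} for $\centerz<\Hradius>[\atime]$ with $\centerz<\Hradius>[0\,]=\boldsymbol 0$; it then integrates over $\atime\in[0,1]$, replaces the sphere $\mathcal S^2_\Hradius(\centerz<\Hradius>[\atime])$ by $\sphere^2_\Hradius(0)$ via Proposition~\ref{leaves_are_almost_concentric}, and unwinds the integrand in coordinates. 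Your combination of Proposition~\ref{the_moving_of_the_spheres_by_the_lapse_function}, Lemma~\ref{change_of_meancurvature_sc}, and a rerun of the proof of Theorem~\ref{inf_evolution_of_the_leaves} is that same argument, and your error bookkeeping ($\nicefrac C{\Hradius^2}\,\Vert\arnu\Vert_{\Lp^2(\M)}\le C\Hradius^{-\outve}$ by Lemma~\ref{regularity_of_the_spacetime_lapse_function}, $\atime$-independence of the leading integrand) is consistent with the paper.

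There is, however, one concrete misstep: you claim that because the artificial spacetime has $\ralpha\equiv1$, the correction term $\Hradius\,\normal_i\,\outsymbol J(\normal)$ disappears. It does not. In Theorem~\ref{inf_evolution_of_the_leaves}, $\outsymbol J$ is the constraint momentum density of whatever data is inserted, and for the artificial slices this is $\aoutdiv(\aoutmc\;\aoutg[\atime]*-\aoutzFund)$, which is generically nonzero and consists precisely of first derivatives of $\outg*-\schwarzg$. The paper's ODE \eqref{artificial_mfld_ode_zentrum} retains this term, and it is exactly this term that produces the derivative half $\outsymbol x_i(\partial[\outsymbol x]_j@{\outg_{jk}}-\partial[\outsymbol x]_k@{\outg_{jj}})$ of the stated integrand; the algebraic term $\amomentum[\atime](\normal,\outsymbol e_i)$ contains no derivatives of the metric and only accounts for the $(\outg_{ij}\,\frac{\outsymbol x^j}\rad-\outg_{jj}\,\frac{\outsymbol x_i}\rad)$ part. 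So, taken literally, dropping the $\outsymbol J$-term yields an incomplete formula. Your following sentence (a surface integral in $\outg*-\schwarzg$ \emph{and its first derivatives}) suggests you actually intend to keep this contribution and merely to observe that it is not an external matter term but is expressed through $\aoutzFund$; if so, say exactly that -- the $\outsymbol J$-term survives, is rewritten via the artificial constraint equation as first derivatives of $\outg_{ij}-\schwarzg_{ij}$, and supplies the derivative terms -- and the proof then closes as in the paper.
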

\begin{proof}
We use the artificial Lorentzian manifold $(\uniM,\unig[a])$ as in Construction \ref{Artificial_Lorentzian_manifolds}. We conclude that the exterior curvature of any artificial time-slice $\aoutM[\atime]:=\{\atime\}\times\outM$ is given by ${\aoutzFund[\atime]*}=2(\schwarzg-\outg*)$ and the induced metric is $\aoutg[\atime]*$. In particular, we can calculate the Euclidean coordinate centers $\centerz<\Hradius>=\centerz<\Hradius>[1]$ of the leaves $\M<\Hradius>=\M<\Hradius>[1]$ of the CMC foliation $\lbrace\M<\Hradius>\rbrace_\Hradius$ near infinity of $\outM[1]$ with respect to the \lq real\rq\ metric $\outg*=\outg[1]*$, by looking at Theorem \ref{inf_evolution_of_the_leaves}. Using the fact that in coordinates $\M<\Hradius>[\atime]\approx\mathcal S^2_\Hradius(\centerz<\Hradius>[\atime])$, we therefore obtain the (asymptotic) ordinary differential equations
\begin{align*}
 \centerz<\Hradius>[0\,] ={}& \boldsymbol 0, &
 \vert\partial[\atime]@{\centerz<\Hradius>[\atime]} - \frac1{8\pi m} \int_{\mathcal S^2_\Hradius(\centerz<\Hradius>[\atime])} (\amomentum[\atime\,](\nu,\outsymbol e_i) + \Hradius\,\normal_i\,\outsymbol J(\normal))\d\mug \vert \le{}& \frac C{\Hradius^{\outve}}. \labeleq{artificial_mfld_ode_zentrum}
\end{align*}
Furthermore by Proposition \ref{leaves_are_almost_concentric}, we see
\begin{align*}
 \vert\int_{\mathcal S^2_\Hradius(\centerz<\Hradius>[\atime])} (\amomentum[\atime](\nu,e_i) + \Hradius\,\normal_i\,\outsymbol J(\normal))\d\mug
	- \int_{\mathcal S^2_\Hradius(\boldsymbol 0)} (\amomentum[\atime](\normal,e_i) + \Hradius\,\normal_i\,\outsymbol J(\normal))\d\Haus^2\vert \le{}& \frac C{\Hradius^{\outve}}, \labeleq{artificial_mfld_ode_zentrum_ineq}
\end{align*}
where $\Haus^2$ is the canonical measure of the Euclidean coordinate sphere. By taking into account the assumptions on $\outg*$, we conclude the result.
\end{proof}\pagebreak[3]

\noindent Let us now quickly recall the definition of the ADM center of mass.
\begin{definition}[ADM center of mass]
Let $(\outM,\outg*)$ be a $\Ck^2$-asymptotically Schwarzschildean three-dimensional Riemannian manifold of order $1+{\outve}$ with ${\outve}>0$. The coordinate ADM center of mass $\centerz_{\text{ADM}}$ is defined to be
\begin{align*}
 (\outcenter_{\text{ADM}})_i :={}& \frac1{16\pi m} \lim_{\rradius\to\infty} \int_{\sphere^2_\rradius(0)}\sum_{j=1}^3(
																					\outsymbol x_i(\partial[\outsymbol x]_j@{\outg_{jk}}-\partial[\outsymbol x]_k@{\outg_{jj}})
																				- (\outg_{ij}\,\frac{\outsymbol x^j}\rradius - \outg_{jj}\,\frac{\outsymbol x_i}\rradius))\d\Haus^2,
\end{align*}
where indices are lowered with the Euclidean metric and $\Haus^2$ is the canonical measure on the sphere.
\end{definition}\pagebreak[1]
Using Corollary \ref{euclidean_coordinate_center_of_a_leave} for the limit $\Hradius\to\infty$, the equality of the coordinate CMC center of mass and the coordinate ADM center of mass is a direct implication of Corollary \ref{euclidean_coordinate_center_of_a_leave}.
\begin{corollary}[Euclidean coordinate center of the foliation]
Let $(\outM,\outg*)$ be a $\Ck^2$-asymptotically Schwarzschildean three-dimensional Riemannian manifold of order $1+{\outve}$ with ${\outve}>0$. If either the coordinate ADM center $\outcenter_{\text{ADM}}$ or the coordinate CMC center $\outcenter_{\text{CMC}}$ converge then so does the other one and they coincide.
\end{corollary}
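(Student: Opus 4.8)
The plan is to read the statement off Corollary~\ref{euclidean_coordinate_center_of_a_leave} directly; no further geometry is needed. Write $A_i(\Hradius)$ for the surface integral over $\sphere^2_\Hradius(0)$ occurring in that corollary, divided by $16\pi m$. By inspection this is, term by term and including the normalization, precisely the expression whose limit as the radius tends to infinity defines the $i$-th component of the coordinate ADM center of mass $\outcenter_{\text{ADM}}$; and, by construction, the $i$-th component of the coordinate CMC center of mass is $(\outcenter_{\text{CMC}})_i = \lim_{\Hradius\to\infty}\centerz<\Hradius>_i$ whenever that limit exists. Both limits are taken as the same real parameter, ranging over $(\Hradius_0,\infty)$, tends to $\infty$.

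First I would restate Corollary~\ref{euclidean_coordinate_center_of_a_leave} in this notation: there is a constant $C$, independent of $\Hradius$, with
\[
 \bigl| \centerz<\Hradius>_i - A_i(\Hradius) \bigr| \le \frac{C}{\Hradius^{\outve}} \qquad\text{for all } \Hradius > \Hradius_0 .
\]
Hence $\centerz<\Hradius>_i - A_i(\Hradius) \to 0$ as $\Hradius\to\infty$, so by elementary real analysis $\lim_{\Hradius\to\infty}\centerz<\Hradius>_i$ exists if and only if $\lim_{\Hradius\to\infty}A_i(\Hradius)$ exists, and in that case the two limits agree. Reading this off for $i=1,2,3$ gives that $\outcenter_{\text{CMC}}$ converges if and only if $\outcenter_{\text{ADM}}$ converges, and that $\outcenter_{\text{CMC}} = \outcenter_{\text{ADM}}$ whenever either, hence both, of them exists.

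The hard part has already been carried out: it is Corollary~\ref{euclidean_coordinate_center_of_a_leave} itself, which rests on Theorem~\ref{inf_evolution_of_the_leaves}, Proposition~\ref{leaves_are_almost_concentric}, and the artificial spacetime of Construction~\ref{Artificial_Lorentzian_manifolds}. The only points requiring care in the present argument are bookkeeping ones, namely that the integrand and the constant $\tfrac1{16\pi m}$ in Corollary~\ref{euclidean_coordinate_center_of_a_leave} match the ADM definition verbatim, and that the error $C/\Hradius^{\outve}$ is genuinely uniform in the radius so that it vanishes in the limit; both are already built into the statement of Corollary~\ref{euclidean_coordinate_center_of_a_leave}. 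I would close by noting the by-product that, combined with Corollary~\ref{evolution_of_the_center}, this also yields $\partial[t]@{\outcenter[t]_{\text{ADM}}} = \nicefrac\impuls m$ for the coordinate ADM center of mass whenever it is well-defined and $\momenden$ decays fast enough.
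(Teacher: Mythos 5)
Your argument is exactly the paper's: the paper also deduces this corollary directly from Corollary \ref{euclidean_coordinate_center_of_a_leave} by letting $\Hradius\to\infty$, since the surface integral there is verbatim the ADM expression and the error $C/\Hradius^{\outve}$ vanishes in the limit. Your bookkeeping (same continuous radial parameter for both limits, uniformity of $C$) is the right thing to check, and it is indeed all that is needed.
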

\vfill
\bibliography{bib}
\bibliographystyle{alpha}\vfill
\end{document}